\def\cl@chapter{}
\newcommand{\RR}{\mathbb{R}}
\newcommand{\Op}[1]{\operatorname{#1}}
\newcommand{\Cal}[1]{\mathcal{#1}}
\newcommand{\Lip}{\mathrm{Lip}}
\let\dif\relax
\DeclareMathOperator*{\dif}{d}
\DeclarePairedDelimiterXPP\E[1]{\mathbb{E}}{[}{#1}{]}{}
\DeclarePairedDelimiterXPP\Var[1]{\Op{Var}}{[}{#1}{]}{}
\DeclarePairedDelimiterXPP\Prob[1]{\mathbb{P}}{[}{#1}{]}{}
\DeclarePairedDelimiterX\ScPr[2]{(}{)}{#1,#2}
\DeclarePairedDelimiterX\DuPr[2]{\langle}{\rangle}{#1,#2}
\newcommand{\qd}{{Q}}
\newcommand{\qc}{{q}}
\newcommand{\qg}{{q}}
\newcommand{\qcp}{{\qc}'}
\newcommand{\dq}[1]{\, \dif #1 (\qc)}
\newcommand{\dqp}[1]{\, \dif #1 (\qcp)}
\newcommand{\N}{N}
\newcommand{\Nreal}{N_\mathrm{real}}
\newcommand{\nr}{{n_{r}}}
\newcommand{\nq}{{n_{\qg}}}
\newcommand{\Rq}{{\mathbb{R}^{n_{\qg}}}}
\newcommand{\Rr}{\mathbb{R}^{n_{r}}}
\newcommand{\BRq}{{\mathcal{B}(\Rq)}}
\newcommand{\PRq}[1]{{\mathcal{P}^{#1}(\Rq)}}
\newcommand{\kappar}{\gamma_{{r}}}
\newcommand{\kappaq}{\gamma_{{q}}}
\newcommand{\cDphi}{G_r}
\newcommand{\pms}{\mu}
\newcommand{\pmst}{\pms^t}
\newcommand{\pmsin}{\pms^{\mathrm{in}}}
\newcommand{\pdens}{u}
\newcommand{\pdensin}{\pdens^{\mathrm{in}}}
\newcommand{\Mr}{M_r}
\newcommand{\Mq}{M_q}
\newcommand{\Fr}{{F_{\mathrm{r}}}}
\newcommand{\Fq}{{F_{\mathrm{q}}}}
\newcommand{\Meff}{{M_{\mathrm{eff}}}}
\newcommand{\Feff}{{F_{\mathrm{eff}}}}
\newcommand{\veff}{{v_{\mathrm{eff}}}}
\newcommand{\meff}{{m_{\mathrm{eff}}}}
\newcommand{\feff}{{f_{\mathrm{eff}}}}
\newcommand{\Meffn}[1][\N]{\Meff^{(#1)}}
\newcommand{\Feffn}[1][\N]{\Feff^{(#1)}}
\newcommand{\fmf}{{f_{\mathrm{mean}}}}
\newcommand{\empn}{\mu^{(emp)}}
\newcommand{\myhash} {\text{\#}}
\newcommand{\push}[2]{{#1}{\myhash}{#2}}
\newcommand{\ri}{r^{\mathrm{in}}}
\newcommand{\si}{s^{\mathrm{in}}}
\newcommand{\qi}{\qc^{\mathrm{in}}}
\newcommand{\qdi}{\qd^{\mathrm{in}}}
\newcommand{\xin}{x^{\mathrm{in}}}
\newcommand{\cflow}{Q}
\newcommand{\LipRq}{\mathrm{Lip}(\Rq)}
\crefname{theorem}{Theorem}{Theorems}
\crefname{hypothesis}{Hypothesis}{Hypotheses}
\crefname{notation}{Notation}{Notations}
\crefname{definition}{Definition}{Definitions}
\crefname{remark}{Remark}{Remarks}
\crefname{example}{Example}{Examples}
\crefname{lemma}{Lemma}{Lemmata}
\crefname{proposition}{Proposition}{Propositions}
\crefname{assumption}{Assumption}{Assumptions}
\numberwithin{equation}{section}
\begin{document}

\title{Coupled Systems of Linear Differential-Algebraic and Kinetic Equations
with Application to the Mathematical Modelling of Muscle Tissue.}

\titlerunning{Linear Differential-Algebraic and Kinetic Equations
	with Applications to Muscle Tissue Models}        

\author{Steffen Plunder \and Bernd Simeon
}

\institute{
    Steffen Plunder \at
    University of Vienna,
    Faculty of Mathematics, \\
    Oskar-Morgensternplatz 4,
    1090 Wien \\
    \email{steffen.plunder@univie.ac.at} 
    \and
    Bernd Simeon \at
    TU Kaiserslautern, 
    Felix Klein Zentrum für Mathematik, \\
    Paul-Ehrlich Straße,
    67663 Kaiserslautern \\
    Tel.: +49 631 205 5310 \\
    \email{simeon@mathematik.uni-kl.de}
}

\date{Received: date / Accepted: date}

\maketitle

\begin{abstract}
We consider a coupled system composed of a linear differential-algebraic equation (DAE) and a linear large-scale system of ordinary differential equations where the latter stands for the dynamics of numerous identical particles. 
Replacing the discrete particles by a kinetic equation for a particle density, we obtain in the mean-field limit the new class of partially kinetic systems.

We investigate the influence of constraints on the kinetic theory of those systems and present necessary adjustments. 
We adapt the mean-field limit to the DAE model and show that index reduction and the mean-field limit commute.
As a main result, we prove Dobrushin's stability estimate for linear systems. The estimate implies convergence of the mean-field limit and provides a rigorous link between the particle dynamics and their kinetic description.

Our research is inspired by mathematical models for muscle tissue where the macroscopic behaviour is governed by the equations of continuum mechanics, often discretised by the finite element method, and the microscopic muscle contraction process is described by Huxley’s sliding filament theory. The latter represents a kinetic equation that characterises the state of the actin-myosin bindings in the muscle filaments. 
Linear partially kinetic systems are a simplified version of such models, with focus on the constraints.

\keywords{
    kinetic theory \and
    statistical physics \and 
    differential-algebraic equations \and 
    mathematical modelling \and
    skeletal muscle tissue}
\end{abstract}

\setcounter{tocdepth}{2}
\section{Introduction}
\label{sec:intro}

Differential-algebraic equations (DAEs)\index{DAE}\index{Differential equation!algebraic|see {DAE}} and kinetic equations \index{Kinetic theory|seealso {Mean-field limit}} are usually considered as separate and quite independent topics. While DAEs stem from models that are in some sense constrained, kinetic theory deals with identical particles such as atoms or molecules and their mutual interaction. In this work, we introduce a problem class that combines these two mathematical structures.
More precisely, we demonstrate how to couple a macroscopic component with a kinetic system using algebraic constraints.

For given microscopic laws, the kinetic description of a particle system is obtained by the mean-field limit which replaces the discrete particle states by a particle distribution.
Application of the mean-field limit \index{Mean-field limit} comes with a loss of information: Individual particle positions are lost and only their statistical distribution is available.
This gives rise to a challenge in the coupling process since we cannot impose an algebraic constraint on the individual particle positions of a kinetic system.
However, if we know the microscopic laws governing the particles of a kinetic system,
we can impose the algebraic-constraint on the microscopic level for each particle and then apply the mean-field limit to obtain new kinetic equations for the particles.
We refer to the resulting system as the partially kinetic system.\index{Partially kinetic systems}

To the best of our knowledge, there is no kinetic theory for systems where DAEs describe the microscopic law. In order to provide a rigorous theory for partially kinetic systems, we extend ideas from classical kinetic theory \cite{spohn1980kinetic,spohn2012large,jabinReviewMeanField2014,golseDynamicsLargeParticle2016}.
To streamline the presentation, we restrict ourself to linear systems. 
\color{black}

Important examples for partially kinetic system are mathematical models for muscle contraction. Muscle tissue, with all its supporting tissue (macroscopic component), contracts due to the accumulated force of numerous actin-myosin cross-bridges\index{Cross-bridges} (particles). In this specific case, the kinetic theory of cross-bridges without the coupling is already well-studied and led to the famous Huxley model \index{Huxley model|seealso {Sliding filament theory}} \cite{huxleyMuscleStructureTheories1957,keenerMathematicalPhysiology2009,zahalakDistributionmomentApproximationKinetic1981}.
On the other hand, models from continuum mechanics are today in use to simulate the muscle contraction at the macro-scale in combination with the finite element method. For the coupling of both scales, simplifications and ad-hoc procedures are used so far \cite{bolMicromechanicalModellingSkeletal2008,BrCP96,heidlaufMultiscaleContinuumModel2016,heidlaufContinuummechanicalSkeletalMuscle2017} that call for  a theoretical foundation. 

This article is organized as follows:
\cref{sec:dae_model} presents a strongly simplified DAE model for muscle cells with attached actin-myosin cross-bridges and derives an equivalent ODE formulation for the DAE model.
Next, \cref{sec:muscles_kin} derives formally partially kinetic equations for the DAE model.
Basics of kinetic theory are outlined during the application of the mean-field limit onto the ODE formulation.
For the DAE formulation, the mean-field limit requires modification.
To justify the formal computations rigorously, \cref{sec:mean_field_limit} adapts and proves Dobrushin's stability estimate for linear partially kinetic systems.
With regard to the application fields, \cref{sec:generalisations} sketches possible generalisations of linear partially kinetic systems, while
\cref{sec:numerical_examples} provides details about the numerical implementation of the simulations presented in this article. The numerical challenge of partially kinetic systems is demonstrated by an example in which energy conservation is violated by the discretisation.

\section{A Differential-Algebraic Model for Muscle Cells with Attached Cross-Bridges}
\label{sec:dae_model}

The emergence of macroscopic effects from microscopic properties is a central theme in kinetic theory. \index{Kinetic theory} In laymen terms, emergence describes how the big picture arises from the laws that govern the model at a smaller scale. Understanding this transition is essential in many biological applications \cite{resatKineticModelingBiological2009}.
Muscle tissue consists of millions of small contractible molecules called actin-myosin cross-bridges. Kinetic theory allows the up-scaling 
of these microscopic units to the organ level and provides a means to
derive macroscopic models for muscle tissue. Most macroscopic models focus on the emergence of a contraction force as the result of the synchronization between muscle cells
\cite{herzogSkeletalMuscleMechanics2000,keenerMathematicalPhysiology2009,maDistributionmomentModelEnergetics1991}.
However, there are applications where more than just the macroscopic contraction force is of interest.

One example is vibrational medicine, in particular, the 
medical therapy concept called \emph{Matrix-Rhythm-Therapy}\index{Matrix-Rhythm-Therapy}
\cite{Randoll97} that
treats diseased muscle tissue by vibrational stimulation in a specific frequency range. 
In order to understand this therapy approach, it is crucial to study how the mechanical stimulation influences the physiological health of cells. In laymen terms: How does the big picture influence the small scale?
A first mathematical model for the interplay between mechanics and the physiology of muscle cells was proposed in \cite{simeonModelMacroscaleDeformation2009}. 

We extend this work in the direction of more detailed physiological models for muscle cells that are based on the sliding filament theory for cross-bridges.
In mathematical terms, this requires an understanding of muscles at both, the micro and the macro scale. To study the influence of mechanics on the physiology of muscle cells, the coupling between mechanical properties and physiological models is essential.
In the following, we will study a prototype of a system which couples a physiological model for cross-bridges with a prototypical mechanical system. 
We have to mention that Ma and Zahalak \cite{zahalakNonaxialMuscleStress1996,maDistributionmomentModelEnergetics1991} already studied cross-bridge dynamics with kinetic and thermodynamic methods and also extended their cross-bridge models for refined coupling with continuum models. In contrast, we study a simpler model and focus on mathematical details of the coupling. In \cref{subsec:compare_estab_models}, we relate our mathematical analysis to established models.

\subsection{Sliding Filament Theory for Cross-Bridges}
\label{subsec:sliding_filament}

Compared to many other biological phenomena, the contraction of muscles cells is a relatively well-studied field \cite{herzogSkeletalMuscleMechanics2000,howardMechanicsMotorProteins2001,keenerMathematicalPhysiology2009}.
For a mathematical introduction to muscle models, we refer to  \cite{howardMechanicsMotorProteins2001,keenerMathematicalPhysiology2009}. 
\emph{Sliding filament theory}\index{Sliding filament theory} \cite{herzogSkeletalMuscleMechanics2000,huxleyMuscleStructureTheories1957} is the mainstream theory to explain muscle contraction.

In its simplest form, sliding filament theory \index{Sliding filament theory} suggests that muscle cells consist of parallel myosin and actin filaments, as visualised in \cref{fig:myosin_actin}.
On each actin filament, small binding sides allow myosin heads to attach and to form a bridge between both filaments, a so-called \emph{cross-bridge}.\index{Cross-bridges}
Due to the molecular configuration of newly attached cross-bridges, they pull the two filaments such that they slide alongside each other, which causes a shortening of the muscle cell. This pulling step is called a power stroke. After each power stroke, the myosin head can unbind from the binding side, release the ADP (adenosine diphosphate) molecule and obtain new free energy by hydrolyzing another 
ATP (adenosine triphosphate) molecule. 

The cycling of binding, power stroke, unbinding and resetting of myosin heads is called the cross-bridge cycle. \index{Cross-bridges!cycling}
Since numerous muscle cells contract due to this mechanism, the whole muscle tissue contracts on the macroscopic scale.
The rate at which the cross-bridge cycling takes place controls the contraction strength.
The contraction process varies for different muscle types. However, blocking and unblocking the binding sides at the actin filaments is always part of the control mechanism.
In skeletal muscle tissue, the periodic release of calcium ions unblocks the binding sides. A higher frequency of calcium ion bursts leads to a stronger contraction.

\begin{figure}[h]
	\centering
	\includegraphics[width=0.8\textwidth]{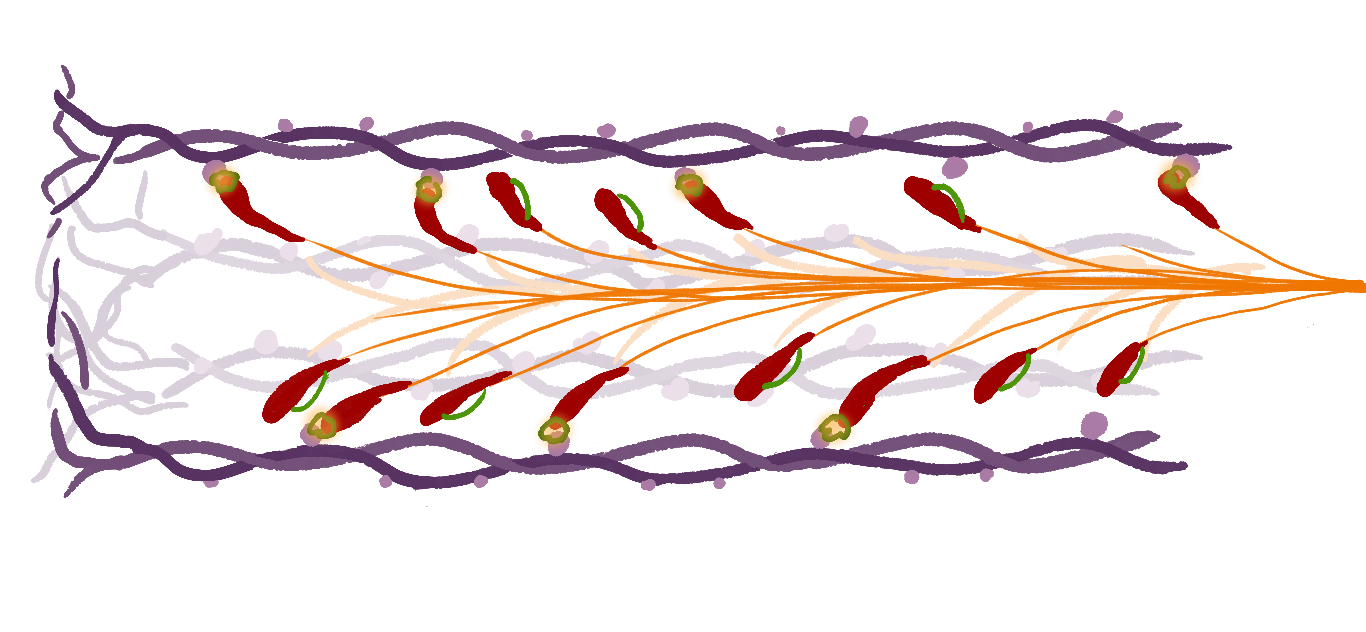}
	\caption{Sketch of the parallel actin filaments (purple, outside) and myosin filaments (orange, central). The myosin heads (red) can attach to binding sides at the actin filament, which forms a so-called cross-bridge. Skeletal muscle fibers are a large array of parallel actin-myosin filaments.}
	\label{fig:myosin_actin}
\end{figure}

From the variety of available mathematical models, we extract the common core, which is given by the sliding filament theory with cross-bridges modelled as springs \cite{herzogSkeletalMuscleMechanics2000,huxleyMuscleStructureTheories1957,keenerMathematicalPhysiology2009,zahalakDistributionmomentApproximationKinetic1981}.
To obtain a linear model, we assume that the springs are linear, which is valid for some models \cite{huxleyMuscleStructureTheories1957,zahalakDistributionmomentApproximationKinetic1981}, but not the case for more detailed models \cite{huxleyProposedMechanismForce1971}, \cite[Section 15.6]{keenerMathematicalPhysiology2009}.
We also simplify the model radically by considering only the attached cross-bridges. Hence, the actual cross-bridge cycling does not take place in the system we present. However, \cref{subsec:compare_estab_models} outlines possible extensions, which we neglect for most of the exposition to avoid distraction from the main mathematical ideas.

\begin{figure}[h]
	\centering
	\includegraphics[width=\textwidth]{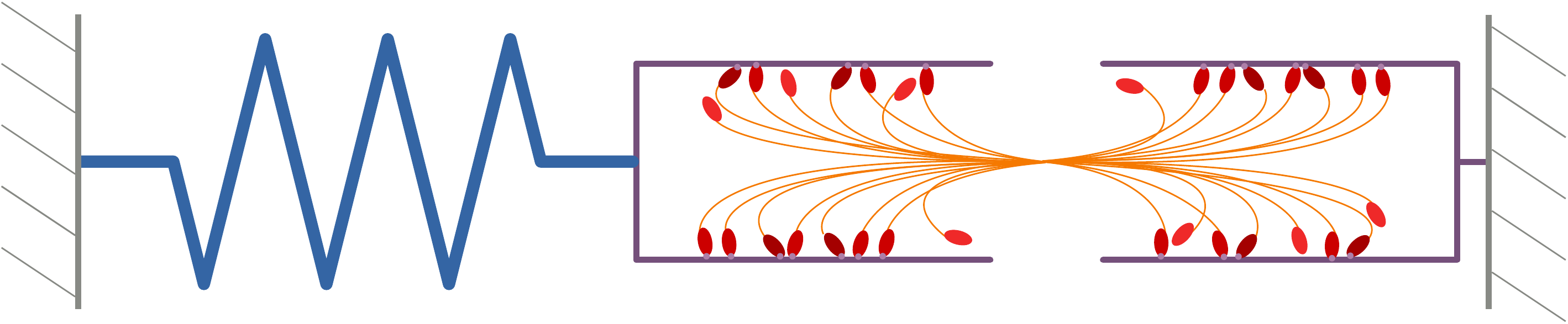}
	\caption{Model for the coupling between a macroscopic linear spring (blue) and microscopic myosin filaments (orange) with their corresponding pair of actin filaments (purple).}
	\label{fig:actin_myosin_spring}
\end{figure}

\subsection{A Differential-Algebraic Model for Attached Cross-Bridges}
\label{subsec:dae_model}

Without further ado, we present the mathematical model for attached cross-bridges\index{Cross-bridges!attached}
in the presence of constraints.
Our goal is to model a muscle cell which is coupled to a macroscopic linear spring, as displayed in \cref{fig:actin_myosin_spring}.
It is sufficient to model only one half of the actin-myosin filaments from \cref{fig:actin_myosin_spring}, since their arrangement is mirror-symmetrical.

We define the dimensions
$n_r \coloneqq 1, n_q \coloneqq 1$, where $n_r$ denotes the dimension of the macroscopic spring and $n_q$ denotes the degrees of freedom of a single cross-bridge. However, throughout this article, we will continue to distinguish between $\RR$, $\Rr$ and $\Rq$ to indicate real numbers and position variables in the according spaces.  The reader is welcome to read this section with $\nq, \nr \geq 1$ in mind. While the coupled cross-bridge model leads to the one-dimensional case, we simultaneously include an abstract model where the macroscopic system has $\nr$ degrees of freedom and each particle has $n_q$ degrees of freedom.

To model the macroscopic spring, we use $r \in \Rr$ as the extension of a linear spring with mass $\Mr$ and force $\Fr(r) = - \kappar r$.

For the microscopic model, we label the attached cross-bridges with $j=1,\dots,\N$, where $\N$ is the total number of cross-bridges. 
The extension of a single cross-bridge is denoted by $\qd_j \in \Rq$ and each cross-bridge is a linear spring with mass $\Mq$ and force $\Fq(\qd_j) = -\kappaq \qd_j$. Because a single cross-bridge is very light compared to the macroscopic spring, the dynamics of cross-bridges are typically fast compared to the macroscopic spring. The small individual mass is compensated by the large number of cross-bridges $\N$, which is a crucial parameter of the system. The situation is sketched in \cref{fig:simplified_actin_myosin_spring}. 

\begin{figure}[h]
	\centering
	\includegraphics[width=\textwidth]{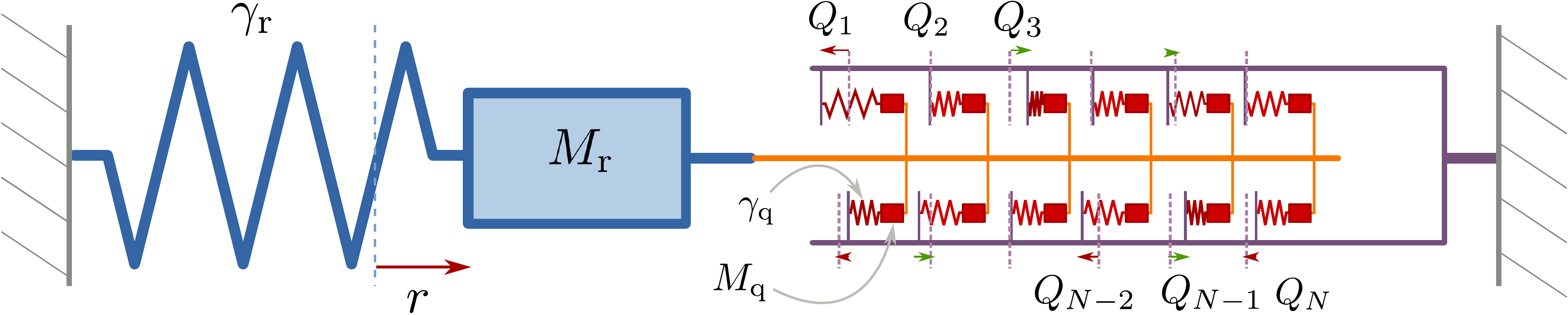}
	\caption{Simplified model for attached cross-bridges (red) between parallelly sliding actin and myosin filaments coupled with a macroscopic linear spring (blue).}
	\label{fig:simplified_actin_myosin_spring}
\end{figure}

For the abstract model $\nr,\nq \geq 1$, we define
the function
\[
g(r,\qd_j) \coloneqq \qd_j + \cDphi  r,
\]
where $\cDphi \in \RR^{\nq \times \nr}$  is an arbitrary, possibly singular matrix.
In the cross-bridge model, the macroscopic spring and the actin filament are considered to be fixed to the walls at both sides, as displayed in \cref{fig:simplified_actin_myosin_spring}. Therefore, we require the total length to remain constant and pick $\cDphi \coloneqq -1$ in the one-dimensional case $\nr = \nq = 1$.
For each cross-bridge, we define the linear constraint \index{Algebraic constraints!linear} as
\[
g(r,\qd_j) = g(\ri,\qdi_j)  \quad \text{for~} j = 1,\dots,N,
\] 
where $\ri \in \Rr$ and $\qdi_j \in \Rq$ denote the initial states of the macroscopic system and the cross-bridges.
The corresponding Lagrangian multipliers \index{Lagrangian multipliers} are denoted by $\lambda_1,\dots, \lambda_N \in \Rq$. 
Overall, we arrive at the following linear differential-algebraic system that models a linear spring coupled to a sliding actin-myosin filament pair with $N$ cross-bridges
\begin{align}
\Mr \ddot r &= - \kappar r - \sum_{i=1}^{\N} \cDphi^T \lambda_i , 
\label{eq:dm_newton_r}
\\
\Mq \ddot \qd_j &= - \kappaq \qd_j - \lambda_j \quad \text{for~} j=1,\dots,\N, 
\label{eq:dm_newton_qj}
\\
\qd_j + \cDphi r  &= \qdi_j + \cDphi \ri  \quad \text{for~} j=1,\dots,\N 
\label{eq:dm_constr_ind3}
\end{align}
with initial conditions
\[
r(0) = \ri \in \Rr, \quad \dot r(0) = \si \in \Rr \quad \text{and} \quad \qd_j(0) = \qdi_j \in \Rq \quad \text{for~} j=1,\dots,\N.
\]
In the following, we will refer to \cref{eq:dm_newton_r,eq:dm_newton_qj,eq:dm_constr_ind3} as the \emph{DAE formulation}.
There is no initial condition for the velocities $\dot \qd_j$, since the constraint implies the compatibility condition
\[
\dot \qd_j(0) = -\cDphi \si  \quad \text{for~} j=1,\dots,\N.
\]

We require the mass matrices $\Mr \in \RR^{\nr \times \nr}, \Mq \in \RR^{\nq \times \nq}$ to be positive definite. We remark that $\pd{g}{\qd_j}(r,\qd_j) = \mathds{1} \in \RR^{\nq \times \nq}$, which implies the full-rank condition required for local existence and uniqueness of solutions \cite[Section VII.I, Eq. (1.16)]{hairerStiffDifferentialalgebraicProblems2010}.

\subsection{Derivation of an Effective Balance Law via Index Reduction and Elimination of Multipliers}
\label{subsec:dm_index_reduction}

The system \cref{eq:dm_newton_r,eq:dm_newton_qj,eq:dm_constr_ind3} has differential index 3 \cite[Section VII.I]{hairerStiffDifferentialalgebraicProblems2010}\cite{BrCP96}.
Due to the particular structure, it is possible to eliminate the Lagrange multipliers and derive an ODE formulation.
Differentiating \cref{eq:dm_constr_ind3} with respect to time yields 
\begin{align}
\dot \qd_j = -\cDphi \dot r
\label{eq:dm_constr_ind2_first}
\intertext{and}
\ddot \qd_j = -\cDphi \ddot r.
\label{eq:dm_constr_ind1}
\end{align}
Using \cref{eq:dm_constr_ind1}, we solve \cref{eq:dm_newton_qj} for $\lambda_j$ and insert the result into \cref{eq:dm_newton_r}, which leads to
\begin{align*}
\Mr \ddot r 
&= -\kappar r - \sum_{i=1}^\N \cDphi^T \left( -\kappaq \qd_i - \Mq \ddot \qd_i \right) \\
&= -\kappar r - \sum_{i=1}^\N \cDphi^T \left( -\kappaq \qd_i + \Mq \cDphi \ddot r \right).
\end{align*}
After collecting the acceleration terms on the left-hand side, one obtains
\begin{align}
\underbrace{ \left( \Mr + \sum_{i=1}^\N \cDphi^T\Mq \cDphi \right) }_{\eqqcolon \Meffn} \ddot r = \underbrace{-\kappar r + \sum_{i=1}^\N \cDphi^T \kappaq \qd_i}_{\eqqcolon \Feffn(r,\qd_1,\dots,\qd_N)}. 
\label{eq:dm_newton_eff}
\end{align}
This system of ordinary differential equations describes the \emph{effective} balance of forces after elimination of the constraint equation, and thus we use the subscript ${}_{\text{eff}}$.

In \cref{eq:dm_newton_eff}, the Lagrangian multipliers are eliminated, but the equation is not closed since $\qd_i$ is needed to compute $\Feffn$.
We employ \cref{eq:dm_constr_ind2_first} to generate a first order differential equation for all $\qd_j$, i.e.
\begin{align}
\dot \qd_j = - \cDphi \dot r \quad \text{for~} j=1,\dots,\N.
\label{eq:dm_constr_ind2}
\end{align}
Now, the equations \cref{eq:dm_newton_eff,eq:dm_constr_ind2}
form a closed linear ordinary differential equation (ODE) which we will call the \emph{ODE formulation}. There are other ODE formulations, but we prefer \cref{eq:dm_newton_eff,eq:dm_constr_ind2} since this form leads to a direct derivation of the mean-field PDE in \cref{subsec:muscle_mf_pde}. 

A numerical simulation of \cref{eq:dm_newton_eff,eq:dm_constr_ind2} is presented in \cref{fig:linear_disc}. For the simulation, the initial conditions of the cross-bridge extensions $\qd_i$ are samples of a normal distribution. 
The cross-bridges influence the effective mass and the effective force of the macroscopic system. This influence leads to a shift of the macroscopic system's equilibrium to $r_0 \approx 1.5$ instead of $r_0 = 0$.
Since the constraint is $r - \qd_j = \ri - \qdi_j$, the trajectories of the cross-bridge extensions just differ by constant shifts.
For details on the numerical method and the used parameters, we refer to \cref{sec:numerical_examples}.

\begin{figure}[h] \centering
	\includegraphics[width=0.48\textwidth]{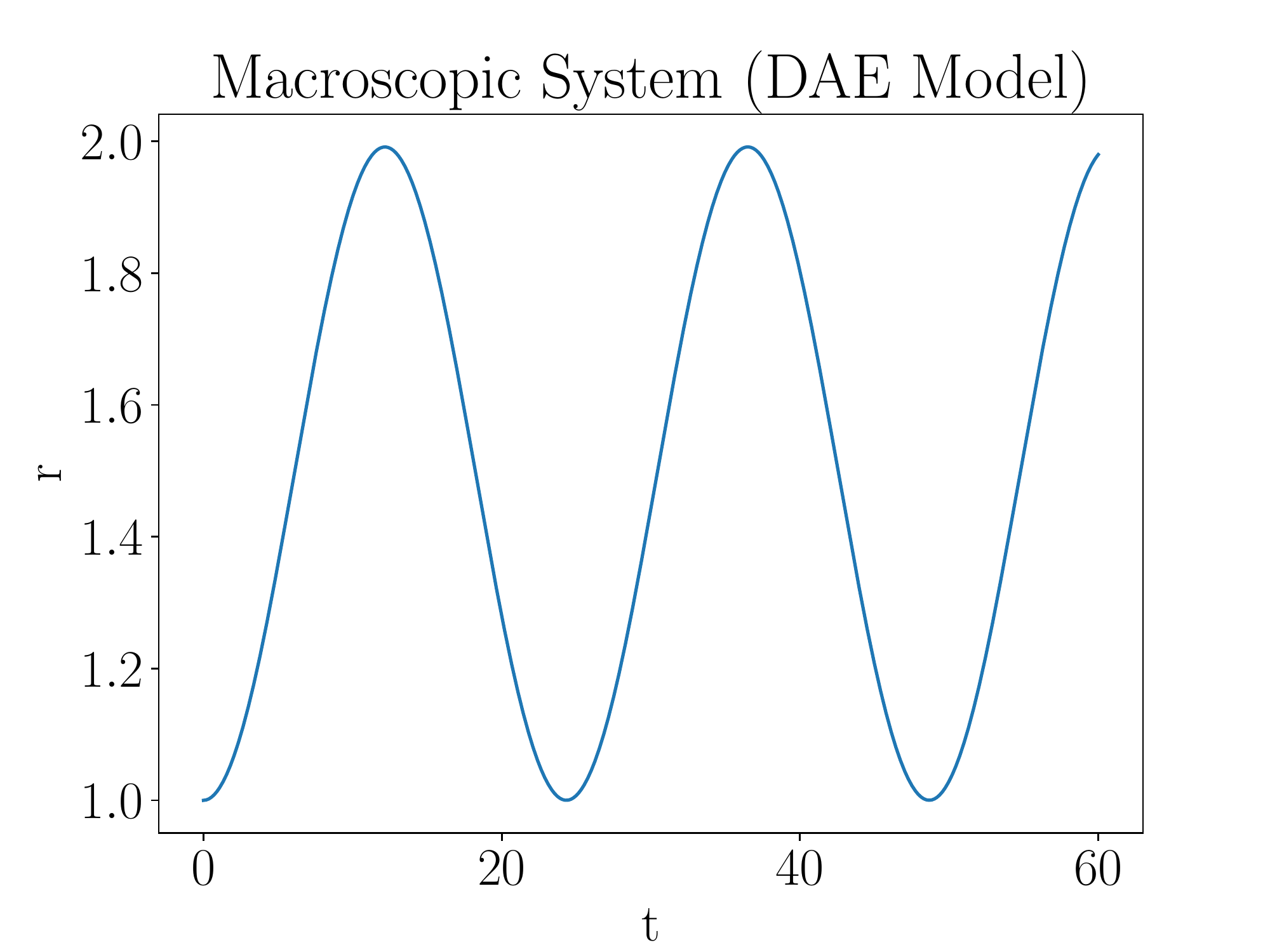}
	\hfill
	\includegraphics[width=0.48\textwidth]{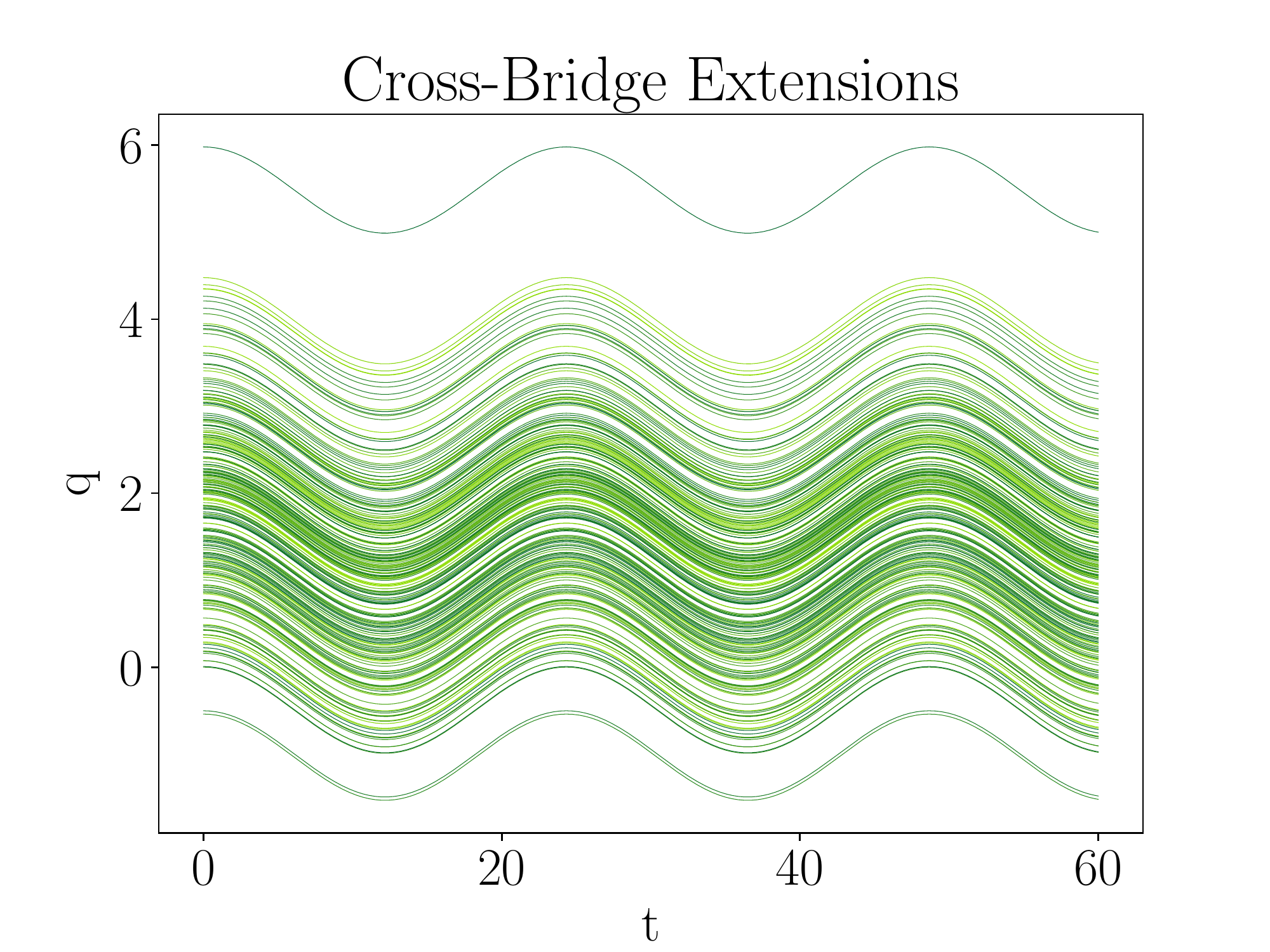}
	\caption{The trajectory of the macroscopic system (left) and the cross-bridge extensions (right).}
	\label{fig:linear_disc}
\end{figure}

\subsection{Explicit Solutions}
The linear constraint \cref{eq:dm_constr_ind3} 
can be solved for $\qd_i$, which yields 
\begin{align}
\qd_i = -\cDphi r + \cDphi \ri + \qdi_i. \notag
\end{align}
With this formula, we can reformulate \cref{eq:dm_newton_eff} as
\begin{align}
\Meffn \ddot r = -\gamma_\mathrm{eff}^{(\N)} ( r - r_\mathrm{0}^{(\N)})
\label{eq:dm_explicit_sol}
\end{align}
with the effective stiffness
\[
\gamma_\mathrm{eff}^{(\N)} \coloneqq \kappar + \N \cDphi^T \kappaq
\]
and the new equilibrium
\[
r_0^{(\N)} = \cDphi^T \kappaq \sum_{i=1}^N (\cDphi \ri + \qdi_i).
\]
Equation \cref{eq:dm_explicit_sol} has well-known explicit solutions.
The system \cref{eq:dm_newton_r,eq:dm_newton_qj,eq:dm_constr_ind3} 
is therefore a benchmark case for partially kinetic systems.

In the setting of more realistic muscle models, explicit solutions are not available any more, since attachment and detachment of cross-bridges lead to a switched system where the number of cross-bridges $\N$ changes over time, as outlined in \cref{subsec:compare_estab_models}.
Moreover, if the constraint function $g(r,\qd_j)$ is nonlinear with respect to $\qd_j$, then explicit solutions are not known in general.
These extensions are discussed in \cref{sec:generalisations}. 

Despite the existence of explicit solutions, we will continue 
without using \cref{eq:dm_constr_ind3} to solve for $\qd_j$, since this benchmark-setting is convenient to demonstrate the influence of constraints on kinetic theory. The calculations in \cref{sec:muscles_kin,sec:mean_field_limit} generalise well to relevant extensions as sketched in \cref{sec:generalisations}. Only in proofs of this article, we will use \cref{eq:dm_constr_ind3} explicitly.

\section{Partially Kinetic Model for Muscle Cells with Attached Cross-Bridges}
\label{sec:muscles_kin}

The kinetic theory for cross-bridges was investigated already in the early eighties \cite{huxleyMuscleStructureTheories1957}.
The first approaches suggested to model attached cross-bridges as linear springs, 
while many refinements have been introduced later on and are still today subject of
current research \cite{herzogSkeletalMuscleMechanics2017}.
To compute the contraction force, all models known to us
 \cite{bolMicromechanicalModellingSkeletal2008,heidlaufMultiscaleContinuumModel2016,heidlaufContinuummechanicalSkeletalMuscle2017,herzogSkeletalMuscleMechanics2000,keenerMathematicalPhysiology2009} assume implicitly
that the kinetic equations remain valid without modification in the presence of constraints, cf. \cite{herzogSkeletalMuscleMechanics2000}.
Moreover, the masses of the cross-bridges are assumed to add no kinetic energy to the macroscopic system. These two assumptions are very reasonable and lead to successful models.
In the following, we want to compute explicitly how the kinetic equations look like in the presence of constraints and give a mathematical quantification for common modelling assumptions.
\index{Kinetic theory!constrained}

We remark that \cref{subsec:kin_char_flow_ode} comprises a short outline of the fundamentals of kinetic theory. For this purpose, it is preferable to use the ODE formulation \cref{eq:dm_newton_eff,eq:dm_constr_ind2}
instead of the DAE formulation \cref{eq:dm_newton_r,eq:dm_newton_qj,eq:dm_constr_ind3}. 
Afterwards, in \cref{subsec:kin_char_flow_dae}, similar steps are applied to the DAE formulation.

\subsection{Partially Kinetic Equations for the ODE Formulation}
\label{subsec:kin_char_flow_ode}

This section outlines the derivation of the partially kinetic equations for the ODE formulation \cref{eq:dm_newton_eff,eq:dm_constr_ind2}. \index{Partially kinetic systems}
We obtain the kinetic equations by the formal mean-field limit. In \cref{sec:mean_field_limit}, this approach is justified by a rigorous estimate which proves that the solutions of the ODE formulation
convergence under the scaling assumptions of the mean-field limit to the solutions of the partially kinetic equations.
There are different techniques to compute the mean-field limit for ODE models. 
The dual pairing between measures and smooth test functions is a powerful formalism to derive the mean-field PDE in its weak formulation \cite{jabinReviewMeanField2014}. However, we will motivate the mean-field limit as a generalisation of the strong law of large numbers, which is a less abstract approach. The derivation follows these steps:
\begin{enumerate}[label={\textbf{Step \arabic*:}},ref={step \arabic*},leftmargin=*]
\item Introduction of a scaling factor, \label{it:ind1_scaling_factor}
\item Introduction of a measure $\pmst$ to describe the statical distribution of the cross-bridge extensions,
\item Derivation of the mean-field characteristic flow equations, which govern the evolution of the cross-bridge distribution $\pmst$,
\item Derivation of a kinetic balance law for the macroscopic system.  \label{it:ind1_kin_eff_law}
\end{enumerate}

Step 1:
As the number of cross-bridges is large, we want to study the limit of infinitely many cross-bridges $N \to \infty$.
A naive limit $N \to \infty$ leads to a system with infinitely many identical linear springs. Such a system is either in equilibrium or entails an infinite force.
The force term in \cref{eq:dm_newton_eff}
\[
\Feffn[\N](r,\qd_1,\dots,\qd_\N) = -\kappar r - \cDphi^T \sum_{j=1}^N \kappaq \qd_j
\]
will either be divergent or the cross-bridge extensions form a zero sequence.
Therefore, the naive limit is mathematically and physically unreasonable since it describes either states close to equilibrium or states with infinite energy.

The scaling assumption of the mean-field limit is that 
the force $\sum_{j=1}^N \Fq(\qd_j)$ is replaced by the mean-field force $\frac{1}{N} \sum_{j=1}^N \Fq(\qd_j)$. \index{Scaling!mean-field}
Therefore, while increasing the number of cross-bridges, we scale the mass and force of each cross-bridge, such that the total energy remains constant.
To maintain the right ratio between the macroscopic system and the cross-bridges, we add another factor $\Nreal$, which denotes the realistic numbers of cross-bridges.
Hence, we apply the following scaling to the ODE formulation \cref{eq:dm_newton_eff,eq:dm_constr_ind2}
\begin{align}
\tilde \Mq \coloneqq \frac{\Nreal}{\N} \Mq \quad \text{and} \quad \tilde \Fq(\qd_j) \coloneqq \frac{\Nreal}{\N} \Fq(\qd_j) = -\frac{\Nreal}{\N} \kappaq \qd_j.
\label{eq:scaling_ode_level}
\end{align}

After this modification, \cref{eq:dm_newton_eff,eq:dm_constr_ind2} take the form
\begin{align}
 \big( \Mr + \overbrace{\frac{\Nreal}{\N}\sum_{i=1}^\N \cDphi^T\Mq \cDphi }^{\eqqcolon M^{(N)}_\text{mean}}  \big) \ddot r &= -\kappar r +  \overbrace{ \frac{\Nreal }{\N} \sum_{i=1}^\N \cDphi^T \kappaq \qd_i}^{\eqqcolon F^{(\N)}_\text{mean}(r,\qd_1,\dots,\qd_\N)},
 \label{eq:dm_scaled_newton_eff}\\
  \dot \qd_j &= - \cDphi \dot r.
  \label{eq:dm_scaled_constr_ind2}
\end{align}
For the mathematical discussion, we might assume without loss of generality $\Nreal = 1$,
which is a typical simplification in kinetic theory \cite{golseDynamicsLargeParticle2016,jabinReviewMeanField2014}.
However, for partially kinetic systems, the correct ratio between masses and forces of both systems is relevant. Therefore, in contrast to the classical case, different values of $\Nreal$ change the properties of the kinetic equations.

Step 2:
A key observation in equation \cref{eq:dm_scaled_newton_eff} is that only the mean value of the cross-bridge masses $M^{(N)}_\text{mean}$ and the mean value of the cross-bridge forces $F^{(\N)}_\text{mean}$ are relevant.
In other words, we are just interested in the statistics of $(\qd_1,\dots,\qd_N)$ but not in particular states of single cross-bridges. This observation motivates the use of a probability measure to quantify the distribution of the cross-bridges.

We will use the following notations from measure theory with notation as in \cite{golseDynamicsLargeParticle2016}: The Borel $\sigma$-algebra on $\Rq$ is denoted by $\BRq$ and the corresponding
space of probability measures on $\Rq$ is $\Cal P(\Rq)$.
The space of probability measures $\mu \in \Cal P(\Rq)$ with finite first moments $\int_\Rq \norm{\qc} \dq \mu < \infty$ is denoted by $\Cal P^{1}(\Rq)$.

We assume that for each fixed time $t$, there is a probability measure $\pmst \in \PRq{1}$, such that the cross-bridge extensions $\qd_j(t)$ are independent and identically distributed random variables with probability law $\pmst$. We use the notation
\[
\qd_j(t) \sim \pmst \quad \vcentcolon\Leftrightarrow \quad \mathbb P( \qd_j(t) \in A ) = \pmst(A), \quad \text{for all~} A \in \BRq. 
\]
We call $\pmst$ the cross-bridge distribution.\index{Cross-bridges!distribution}\footnote{It is not trivial to argue why \emph{all} cross-bridges $\qd_j$ are well described by \emph{one} common probability measure $\pmst$. This property is related to the concept of \emph{propagation of chaos} \cite{jabinReviewMeanField2014}.
The mean-field limit, which generalised the strong law of large numbers, is one possibility to overcome this issue.}

Step 3: To characterise the evolution of $\pmst$, we will now define the characteristic flow. \index{Characteristic flow}
We assume that just the initial cross-bridge distribution $\pmsin \in \Cal P^1(\Rq)$ is known, i.e.
\[
\qdi_j \sim \pmsin, \quad \text{for all~} j = 1,2,\dots.
\]
We interpret the velocity constraint $\dot \qd_j = -\cDphi \dot r$ \cref{eq:dm_constr_ind2} as a first order differential equation and denote its flow by $\cflow(t,\qi)$. Hence,  $\cflow(t,\qi)$ satisfies for all $\qi \in \Rq$
\begin{align}
\dot \cflow(t,\qi) &= - \cDphi \dot r(t), 
\label{eq:muscle_cflow}\\
\cflow(0,\qi) &= \qi. 
\label{eq:muscle_cflow_ini}
\end{align}
In the setting of \cref{eq:dm_scaled_newton_eff,eq:dm_scaled_constr_ind2}, the discrete cross-bridge states satisfy
\begin{align}
\qd_j(t) = \cflow(t,\qdi_j).
\label{eq:relation_cflow_qdj}
\end{align}

\begin{figure}[h]
	\centering
	\includegraphics[width=0.5\textwidth]{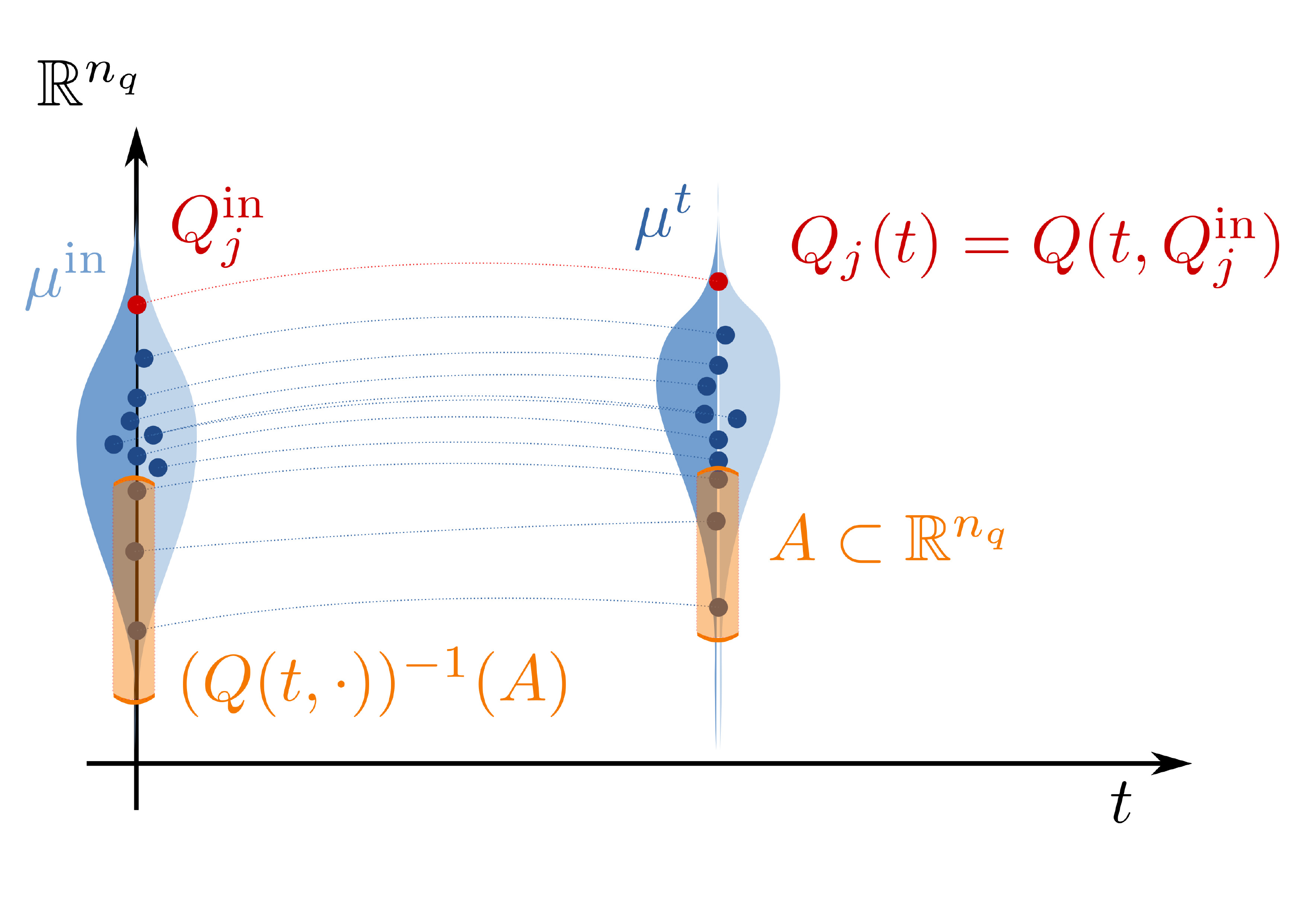}
	\caption{The flow of the particles does also induce a transformation of the initial particle measure $\pmsin$.}
	\label{fig:push_forward}
\end{figure}

Since the cross-bridge extensions follow the characteristic flow of $\cflow(t,\cdot)$, the distribution of cross-bridges is also determined by the characteristic flow.
More precisely, the cross-bridge distribution $\pmst$ is the transformation of the initial cross-bridge distribution $\pmsin$ under the flow $\cflow(t,\cdot) : \Rq \to \Rq$. This transformation is visualized in \cref{fig:push_forward}.
To measure how many cross-bridges have extensions in $A \in \BRq$, we can count how many cross-bridges have an initial extension in $\left(\cflow(t,\cdot)\right)^{-1}(A)$, i.e.
\[
\qd_j(t) \in A \quad \Leftrightarrow \quad \qdi_j \in \left(\cflow(t,\cdot)\right)^{-1}(A).
\]
This relation characterises the pushforward of a measure \index{Pushforward!measure} \cite{golseDynamicsLargeParticle2016,jabinReviewMeanField2014}.
For a map $\varphi : \Rq \to \Rq$, the pushforward of $\pmsin$ under $\varphi$ is defined as
\[
\push{\varphi}{\pmsin}(A) \coloneqq \pmsin(\varphi^{-1}(A)), \quad \text{for all~} A \in \BRq.
\]
Applied to our situation, with $\varphi = \cflow(t,\cdot)$, the cross-bridge distribution at time $t$ is the pushforward of $\pmsin$ under $\cflow(t,\cdot)$. Therefore, the evolution of $\pmst$ is characterised by
\begin{align}
\pmst \coloneqq \push{\cflow(t,\cdot)}{\pmsin}.
\label{eq:muscle_pf_first}
\end{align}

Step 4: Our goal is to approximate the limit $N \to \infty$ of \cref{eq:dm_scaled_newton_eff} by an expression depending on $\pmst$. Consequently, we  continue with computing $M_\text{mean}^{(\N)}$ and $F_\text{mean}^{(\N)}$ \cref{eq:dm_scaled_newton_eff} in the limit $\N \to \infty$.
Now, we make use of the assumption that all cross-bridges are independent and identically distributed with law $\qd_j(t) \sim \pmst$.
Application of the \emph{strong law of large numbers} \index{Strong law of large numbers} \cite[Section 5.3]{klenkeProbabilityTheoryComprehensive2008} yields 
\begin{align}
\lim_{N \to \infty} F_\text{mean}^{(\N)}(t)
&= \Nreal \lim_{N \to \infty} \cDphi^T \kappaq  \frac{1}{\N} \sum_{i=1}^\N \qd_i(t) \notag \\
&= \Nreal \cDphi^T \kappaq  \mathbb{E}\left[ \qd_1(t) \right] \quad \text{almost surely}.
\label{eq:strong_law_large_numbers_mean_field}
\end{align}
The sum converges almost surely (a.s.), i.e. with probability $1$ the last equality holds. 

Using $\E{\qd_1(t)} = \int_\Rq \qc \dq \pmst$, we define the mean-field force as
\begin{align}
f_{\text{mean}}(\pmst) \coloneqq \Nreal \cDphi^T \kappaq  \int_\Rq \qc \dq \pmst.
\label{eq:mf_force_def}
\end{align}
Due to \cref{eq:strong_law_large_numbers_mean_field}, the mean-field force satisfies 
\[
f_{\text{mean}}(\pmst) = \lim_{N \to \infty} F_\text{mean}^{(\N)}(t) \quad a.s.
\]

Similarly, the mean-field mass is 
\begin{align}
m_{\text{mean}}(\pmst) &\coloneqq \Nreal \int_\Rq \cDphi^T \Mq \cDphi \dq \pmst 
\label{eq:mf_mass_def}\\
&= \Nreal \cDphi^T \Mq \cDphi \int_\Rq \dq \pmst = \Nreal  \cDphi^T \Mq \cDphi.\notag
\end{align}

In the setting of our model, the mean-field mass is constant, since we only consider linear constraints.
We remark that for nonlinear constraints, the term $\cDphi$ would depend on $\qc$ in general, 
which calls for a more profound analysis.

With \cref{eq:mf_force_def,eq:mf_mass_def}, we obtain the kinetic formulation of the effective balance law \cref{eq:dm_scaled_newton_eff} as
\begin{align}
\underbrace{ \left( \Mr + \Nreal \cDphi^T \Mq \cDphi  \right) }_{\eqqcolon \meff} \ddot r = \underbrace{-\kappar r + \Nreal \cDphi^T \kappaq \int_\Rq \qc \dq \pmst }_{\eqqcolon \feff(r,\pmst)}. 
\label{eq:muscle_newton_eff_int}
\end{align}

Finally, the kinetic description of the ODE formulation \cref{eq:dm_newton_eff,eq:dm_constr_ind1} is given by the effective balance law \cref{eq:muscle_newton_eff_int}, the characteristic flow equations \cref{eq:muscle_cflow} and the pushforward relation \cref{eq:muscle_pf_first}.

The \emph{partially kinetic model} for muscle cells with attached cross-bridges 
\index{Cross-bridges!attached} \index{Partially kinetic systems!ODE form} is given by \cref{eq:muscle_newton_eff_int,eq:muscle_cflow,eq:muscle_cflow_ini,eq:muscle_pf_first}, summarised as
\begin{align}
\left( \Mr + \Nreal \int_\Rq \cDphi^T \Mq \cDphi \dq \pmst \right) \ddot r &=-\kappar r + \Nreal \cDphi^T \kappaq \int_\Rq  \qc \dq \pmst, 
\label{eq:muscle_newton_eff}\\
\dot \cflow(\cdot,\qi) &= - \cDphi \dot r, \quad \text{for all~} \qi \in \Rq, 
\label{eq:muscle_flow}\\
\pmst &\coloneqq \push{\cflow(t,\cdot)}{\pmsin}
\label{eq:muscle_pushforward}
\end{align}
with initial conditions
\[
r(0) = \ri \in \Rr, \quad \dot r(0) = \si \in \Rr \quad \text{and} \quad \cflow(0,\qi) = \qi
\]
and initial cross-bridge distribution $\pmsin \in \PRq{1}$.
In kinetic theory, systems of the form \cref{eq:muscle_flow,eq:muscle_pushforward}
are called the \emph{mean-field characteristic flow equations}\index{Characteristic flow!mean-field} \cite[Section 1.3]{golseDynamicsLargeParticle2016} since $\cflow(t,\cdot)$ describes the flow of the cross-bridge distribution in the presence of the mean-field forces.
For that reason, we call the model \cref{eq:muscle_newton_eff,eq:muscle_flow,eq:muscle_pushforward} \emph{partially kinetic}, since it combines an effective balance law for the macroscopic system \cref{eq:muscle_newton_eff}  and the \emph{mean-field characteristic flow equations} \cref{eq:muscle_flow,eq:muscle_pushforward}.
 
\subsection{Consistency between the ODE Formulation and the Partially Kinetic Equations.}
 
The cross-bridge distribution $\pmst$ is usually modelled as a continuous measure. However, inserting an empirical measure for the initial state $\pmsin$ yields a consistency check of  \cref{eq:muscle_newton_eff,eq:muscle_flow,eq:muscle_pushforward}.
We define the empirical measure \index{Empirical measure} as
\begin{align}
\empn_{\qd_1,\dots,\qd_N} \coloneqq \frac{1}{N} \sum_{j=1}^N \delta_{\qd_j} \in \PRq{1}, \notag
\end{align}
where $\delta_{\qd_j}$ denotes the Dirac measure which assigns unit mass to the position $\qd_j \in \Rq$. This measure allows us to treat the discrete system \cref{eq:dm_newton_eff,eq:dm_constr_ind2} as a special case of  \cref{eq:muscle_newton_eff,eq:muscle_flow,eq:muscle_pushforward}.

\begin{lemma}[Consistency with the ODE formulation]
	\label{lem:insert_emperical}
	
	For a solution $(r(t), \qd_1(t), \dots, \qd_\N(t))$ of \cref{eq:dm_newton_eff,eq:dm_constr_ind2},
	we define 
	\[
	\pmst = \empn_{\qd_1(t),\dots,\qd_N(t)}.
	\]
	Then $(r(t),\pmst)$ is a solution of \cref{eq:muscle_newton_eff,eq:muscle_flow,eq:muscle_pushforward} with $\Nreal \coloneqq \N$ and initial conditions $r(0) = \ri, \dot r(0) = \si$ and $\pmsin = \empn_{\qdi_1,\dots,\qdi_N}$.
	
\end{lemma}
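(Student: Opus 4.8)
The plan is to verify directly that the pair $(r(t),\pmst)$, together with the flow $\cflow$ obtained by integrating \cref{eq:muscle_flow}, satisfies each of the three partially kinetic equations \cref{eq:muscle_newton_eff,eq:muscle_flow,eq:muscle_pushforward}. Since the statement is a consistency check, the argument is essentially bookkeeping: every integral against the empirical measure $\pmst = \empn_{\qd_1(t),\dots,\qd_N(t)} = \frac{1}{N}\sum_{j=1}^N \delta_{\qd_j(t)}$ collapses to a finite sum over the discrete cross-bridge states, and no analytical estimates are required.

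First I would treat the effective balance law \cref{eq:muscle_newton_eff}. Inserting the empirical measure gives $\int_\Rq \qc \dq\pmst = \frac{1}{N}\sum_{j=1}^N \qd_j(t)$, and, because the constraint is linear so that the integrand $\cDphi^T \Mq \cDphi$ is constant, $\int_\Rq \cDphi^T \Mq \cDphi \dq\pmst = \cDphi^T \Mq \cDphi$. With $\Nreal = \N$ the mean-field mass therefore becomes $\Mr + \N\, \cDphi^T \Mq \cDphi = \Meffn$ and the mean-field force becomes $-\kappar r + \sum_{i=1}^N \cDphi^T \kappaq \qd_i = \Feffn$, so \cref{eq:muscle_newton_eff} reduces verbatim to the ODE balance law \cref{eq:dm_newton_eff}, which $r(t)$ satisfies by hypothesis.

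Next I would handle the characteristic flow and the pushforward. The flow equation \cref{eq:muscle_flow} has a right-hand side $-\cDphi \dot r(t)$ independent of the label $\qi$, so it integrates to the rigid translation $\cflow(t,\qi) = \qi - \cDphi\,(r(t) - \ri)$, using $\cflow(0,\qi)=\qi$ from \cref{eq:muscle_cflow_ini}. Each discrete state $\qd_j(t)$ solves the same equation \cref{eq:dm_constr_ind2} with $\qd_j(0)=\qdi_j$, so by uniqueness $\qd_j(t) = \cflow(t,\qdi_j)$, which is precisely relation \cref{eq:relation_cflow_qdj}. Using the elementary identity $\push{\varphi}{\delta_x} = \delta_{\varphi(x)}$ and the linearity of the pushforward, I then obtain, with $\pmsin = \empn_{\qdi_1,\dots,\qdi_N}$, that $\push{\cflow(t,\cdot)}{\pmsin} = \frac{1}{N}\sum_{j=1}^N \delta_{\cflow(t,\qdi_j)} = \frac{1}{N}\sum_{j=1}^N \delta_{\qd_j(t)} = \pmst$, which is \cref{eq:muscle_pushforward}. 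The initial conditions $r(0)=\ri$, $\dot r(0)=\si$ and $\cflow(0,\qi)=\qi$ hold by construction, and $\pmsin \in \PRq{1}$ since a finite convex combination of Dirac masses trivially has finite first moment.

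The only point requiring genuine care is the identification $\qd_j(t) = \cflow(t,\qdi_j)$: one must observe that the flow field $-\cDphi \dot r(t)$ is common to all particles, so that a single flow map $\cflow(t,\cdot)$ transports every initial state simultaneously, and then invoke uniqueness for the (linear, hence Lipschitz) flow equation to conclude. Once this is in place, the compatibility of the empirical measure with the pushforward is immediate and the remaining verifications are routine substitutions.
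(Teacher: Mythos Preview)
Your proposal is correct and follows essentially the same approach as the paper's proof: define the flow $\cflow(t,\qi)=\qi-\cDphi(r(t)-\ri)$, identify $\qd_j(t)=\cflow(t,\qdi_j)$, compute the pushforward of the empirical measure, and then check that the integrals in \cref{eq:muscle_newton_eff} collapse to the discrete sums in \cref{eq:dm_newton_eff}. The only cosmetic differences are that the paper verifies the flow and pushforward first and the balance law last, and that it obtains $\qd_j(t)=\cflow(t,\qdi_j)$ by directly integrating \cref{eq:dm_constr_ind2} (equivalently invoking the algebraic relation \cref{eq:dm_constr_ind3}) rather than appealing to ODE uniqueness; these are equivalent here.
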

\begin{proof}
Let $(r(t), \qd_1(t), \dots, \qd_\N(t))$ be a solution of \cref{eq:dm_newton_eff,eq:dm_constr_ind2} with the corresponding initial conditions $r(0) = \ri, \dot r(0) = \si$ and $\qd_j(0) = \qi_j$ for all $j=1,\dots,\N$.

1. We define the characteristic flow as 
\[
\cflow(t,\qi) \coloneqq -\cDphi (r(t) - \ri) + \qi,
\]
which is the integral of \cref{eq:muscle_flow} and hence a solution of \cref{eq:muscle_flow} for all $\qi \in \Rq$.

2. Since $\qd_1(t), \dots, \qd_\N(t)$ satisfy \cref{eq:dm_constr_ind3}
\[
\qd_j(t) = -\cDphi (r(t) - \ri) + \qdi_j,
\]
we obtain $\cflow(t,\qdi_j) = \qd_j(t)$, which yields
\begin{align*}
\push{\cflow(t,\cdot)}{\pmsin} 
&= \push{\cflow(t,\cdot)}{\empn_{\qdi_1,\dots,\qdi_\N}} \\
&= \empn_{\cflow(t,\qdi_1),\dots,\cflow(t,\qdi_\N)} \\
&= \empn_{\qd_1(t),\dots,\qd_\N(t)}
= \pmst.
\end{align*}
As a result, $\pmst$ satisfies \cref{eq:muscle_pushforward}.

3. Finally, we insert $\pmst \coloneqq \empn_{\qd_1(t),\dots,\qd_\N(t)}$ into \cref{eq:muscle_newton_eff} and compute
\begin{align}
\left( \Mr + \Nreal \int_\Rq \cDphi^T \Mq \cDphi \dq {\empn_{\qd_1(t),\dots,\qd_\N(t)}} \right) \ddot r(t) &=-\kappar r(t)
\notag \\
& \quad + \Nreal \cDphi^T \kappaq \int_\Rq  \qc \dq  {\empn_{\qd_1(t),\dots,\qd_\N(t)}} \notag \\
\Leftrightarrow \left( \Mr + \Nreal \frac{1}{N} \sum_{j=1}^N \cDphi^T \Mq \cDphi \right) 
\ddot r(t) &=-\kappar r(t) + \Nreal \frac{1}{N} \sum_{j=1}^N \cDphi^T \kappaq \qd_j(t) \notag \\
\Leftrightarrow \quad \left( \Mr +  \Nreal \cDphi^T \Mq \cDphi \right) 
\ddot r(t) &=-\kappar r(t) + \frac{\Nreal}{N} \sum_{j=1}^N \cDphi^T \kappaq \qd_j(t).
\label{eq:consisteny_emp_inserted}
\end{align}
For $\Nreal \coloneqq \N$, the last line \cref{eq:consisteny_emp_inserted} is exactly \cref{eq:dm_newton_eff}. Therefore, $(r(t),\pmst)$ solve \cref{eq:muscle_newton_eff}, which concludes the proof.
\qed
\end{proof}

\cref{lem:insert_emperical} shows that the ODE formulation \cref{eq:dm_newton_eff,eq:dm_constr_ind1} is a special case of the partially kinetic system \cref{eq:muscle_newton_eff,eq:muscle_flow,eq:muscle_pushforward} with an empirical initial measure $\pmsin =\empn_{\qdi_1,\dots,\qdi_N}$.
The consistency check from \cref{lem:insert_emperical} does not prove anything for the limit $\N \to \infty$, but it relates the limit $\N \to \infty$ to the stability of the partially kinetic system with respect to initial data. \index{Stability} We will prove the stability of linear partially kinetic systems with respect to perturbation in the initial data in \cref{sec:mean_field_limit}.

\subsection{Partially Kinetic Mean-Field PDE}
\label{subsec:muscle_mf_pde}

The mean-field characteristic flow equations \cref{eq:muscle_flow,eq:muscle_pushforward} are too complex for direct numerical simulation, as they contain an infinite family of differential equations.
The method of characteristics \index{Method of characteristics} allows us to relate the family of ODEs \cref{eq:muscle_flow,eq:muscle_pushforward}
to a first order partial differential equation (PDE). \index{PDE} \index{Differential equation!partial|see {PDE}} The resulting PDE is called the \emph{mean-field PDE}. \index{PDE!mean-field}\index{Mean-field limit!PDE}

To derive the mean-field PDE, we assume that $\pmsin$ has a probability density, i.e. there exists
a function $\pdens(t,\qc)$ such that
\[
\pdens(t,\qcp) \dif \qcp = \dqp{\pmst} \quad \text{for all~} t \in [0,\infty), \,\, \qcp \in \Rq
\]
where $\dif \qcp$ denotes the Lebesgue measure on $\Rq$ with variable $\qcp$.
Then, the pushforward relation \cref{eq:muscle_pushforward} implies that $u(t,\qcp)$ is constant along the characteristic curves 
\[
t \mapsto \cflow(t,\qi).
\]
Using this invariance, we can compute
\begin{align}
0 &= \dod{\pdens(t,\cflow(t,\qi))}{t} 
= \dpd{\pdens}{t} + 
\dpd{\pdens}{\qc} \dot \cflow \notag\\
\Leftrightarrow \quad 0 &= \dpd{\pdens}{t} - 
\dpd{\pdens}{\qc} \cDphi \dot r.
\label{eq:muscle_transport_eq_first}
\end{align}
The transport equation \index{PDE!transport} \cref{eq:muscle_transport_eq_first} is the mean-field PDE for the ODE formulation \cref{eq:dm_newton_eff,eq:dm_constr_ind2} and the equations \cref{eq:muscle_transport_eq_first,eq:muscle_newton_eff} are another kinetic description for attached cross-bridges.
The possibility to derive a mean-field PDE is the main advantage of the ODE formulation.
In the literature \cite{keenerMathematicalPhysiology2009,howardMechanicsMotorProteins2001}, 
the foundational model for cross-bridge dynamics is the transport equation as in \cref{eq:muscle_transport_eq_first} with additional source terms. In \cref{subsec:compare_estab_models}, the relation between the simplified model of attached cross-bridges and more realistic models is outlined.

A numerical simulation of \cref{eq:muscle_transport_eq_first,eq:muscle_newton_eff} is presented in \cref{fig:linear_meso}.

In contrast to the ODE/DAE simulation, the computational complexity of the transport equation does not increase in complexity for different values of $\Nreal$. 
For more details on the used numerical methods, we refer to \cref{sec:numerical_examples}.
The simulation results are not surprising, and the results fit well to the simulation of the ODE/DAE formulation, as visualised in \cref{fig:linear_meso_compare}.

However, \cref{fig:linear_meso_compare} can be misleading: 
If we denote the solution of the partially kinetic system as $r^{\mathrm{kin}}(t;\pmsin)$ and the solution of the DAE formulation as 
$r^{\mathrm{DAE}}(t;\qdi_1,\dots,\qdi_\N)$, then
the relation is given by 
\begin{align}
\lim_{\N \to \infty} \E{r^{\mathrm{DAE}}(t;\qdi_1,\dots,\qdi_\N)} = r^{\mathrm{kin}}(t;\pmsin),
\label{eq:mf_limit_heavy}
\end{align}
where $\qdi_j \sim \pmsin$ are independent and identically distributed random variables.
Hence, instead of comparing single trajectories $r^{\mathrm{DAE}}(t)$ and $r^{\mathrm{kin}}(t)$,
we need to compare the mean-trajectory $\E{r^{\mathrm{DAE}}(t)}$ with $r^{\mathrm{kin}}(t)$. \cref{subsec:mean_field_numerical} gives a numerical validation of \cref{eq:mf_limit_heavy} and presents numerical evidence for the mean-field limit in \cref{fig:linear_mc,fig:linear_mf_error}.

\begin{figure}[h]
	\centering
	\includegraphics[width=0.45\textwidth]{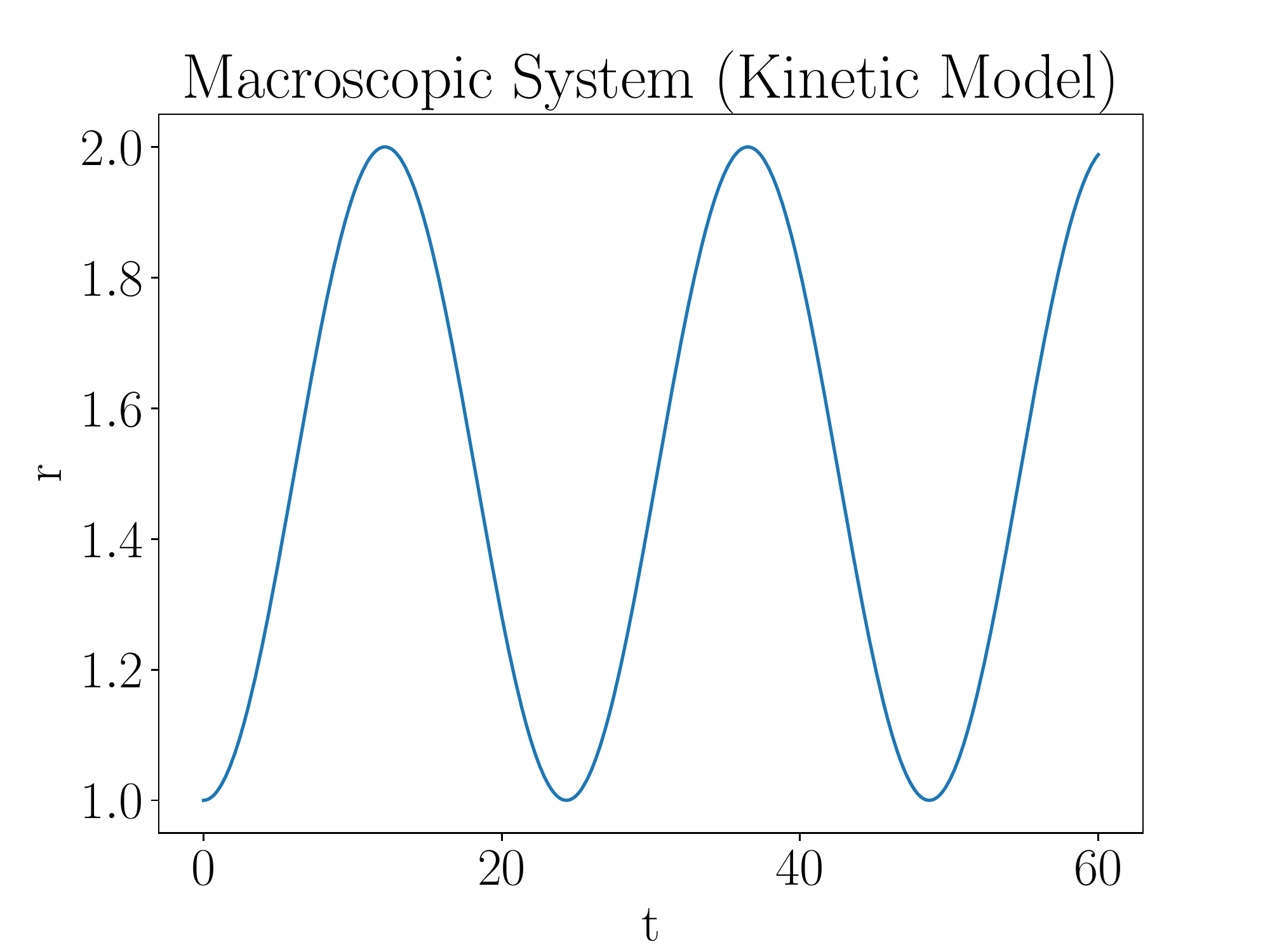}
	\hfill
	\includegraphics[width=0.45\textwidth]{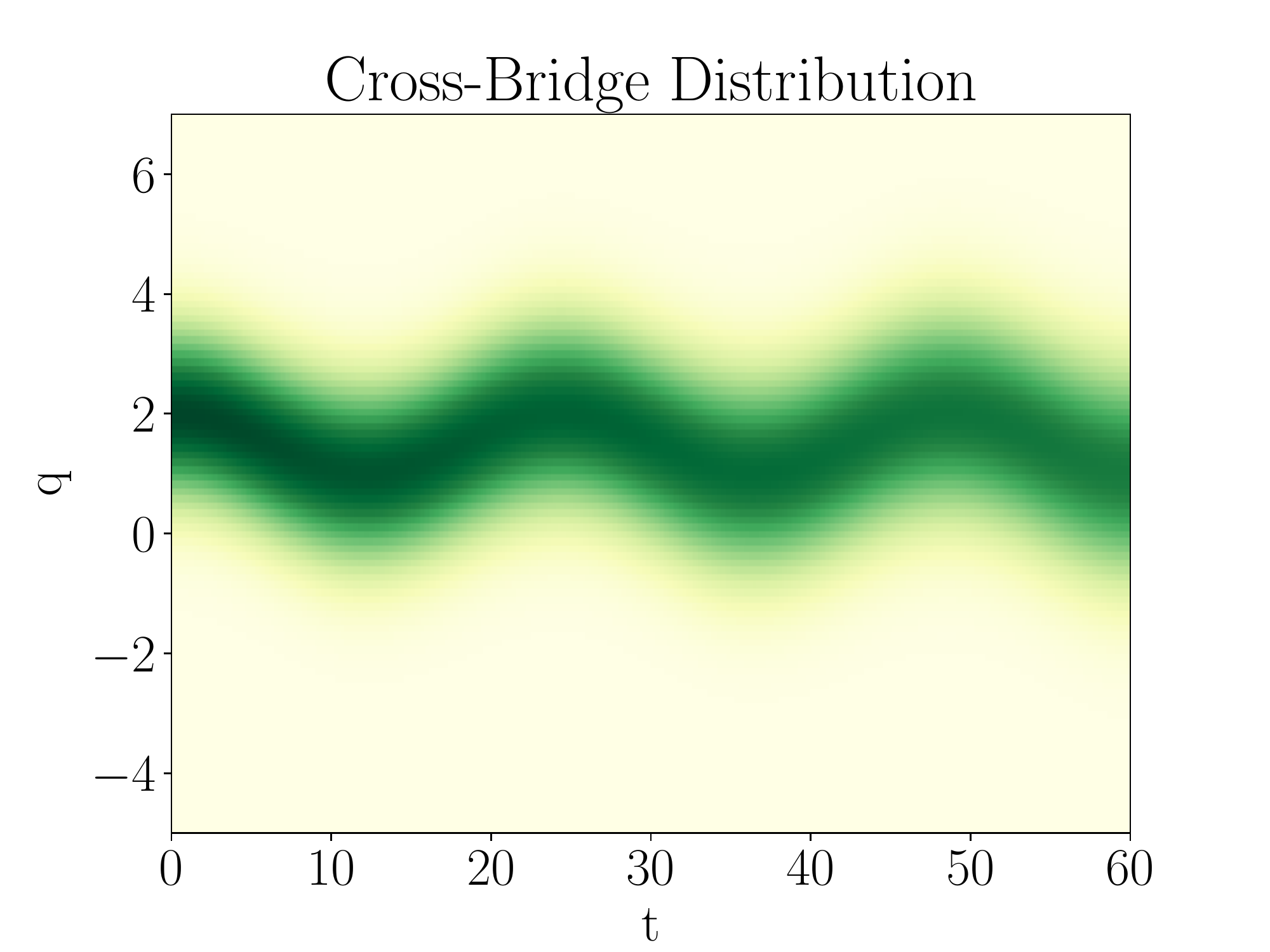}
	\caption{Evolution of the macroscopic system (left) and the cross-bridge  distribution (right). The colour intensity represents the cross-bridge density $\pdens(t,\qc)$.}
	\label{fig:linear_meso}
\end{figure}

\begin{figure}[h]
	\centering
	\includegraphics[width=0.45\textwidth]{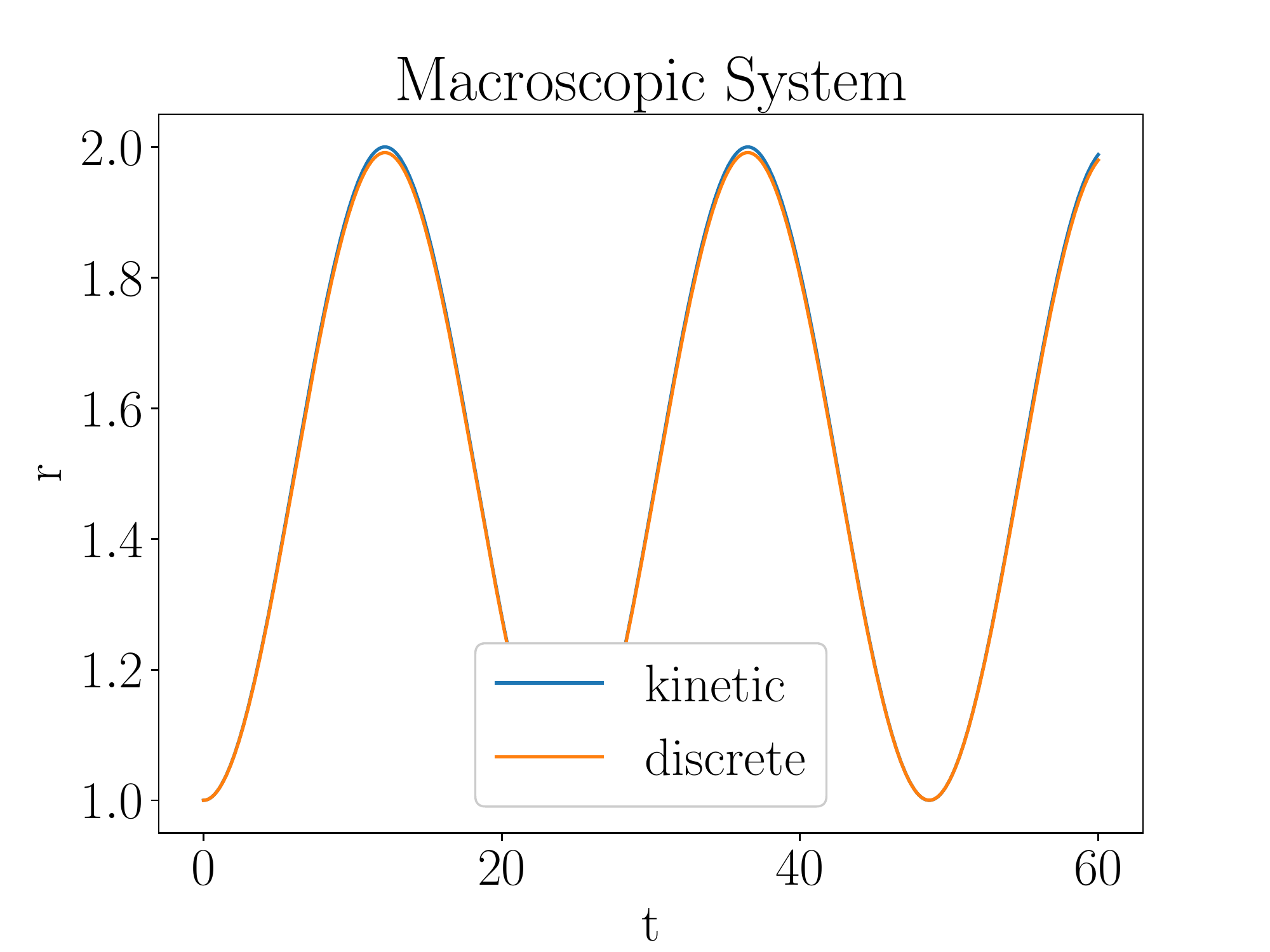}
	\hfill
	\includegraphics[width=0.45\textwidth]{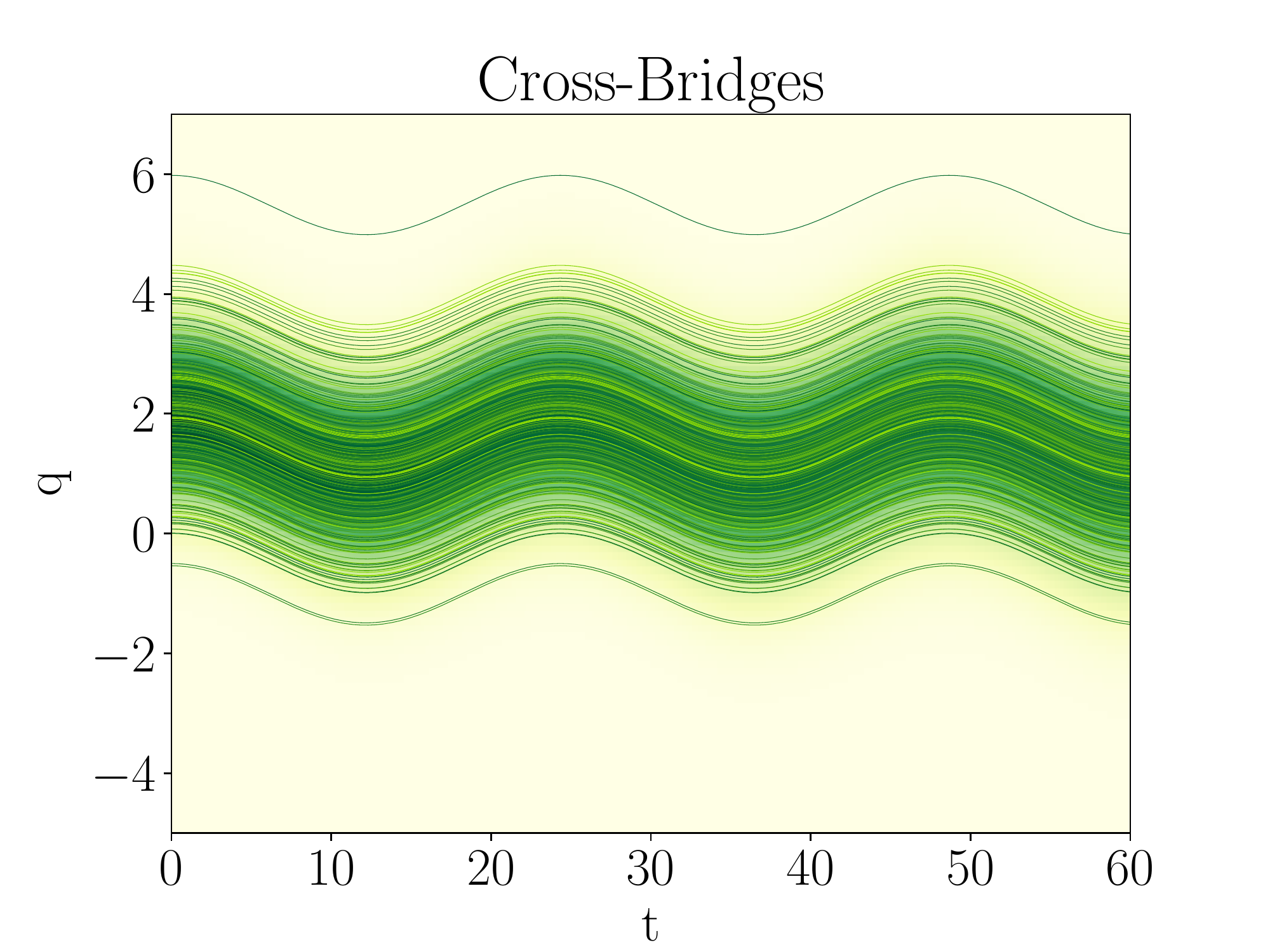}
	\caption{Comparison of the discrete simulation \cref{fig:linear_disc} and the corresponding kinetic trajectory from \cref{fig:linear_meso}.
		The trajectory of the macroscopic system with $250$ cross-bridges is well approximated by the corresponding mean-field equation.}
	\label{fig:linear_meso_compare}
\end{figure}

\subsection{Partially Kinetic Equations for the DAE Formulation}
\label{subsec:kin_char_flow_dae}

To analyse the influence of index reduction, we demonstrate how the mean-field limit applies directly to the DAE \cref{eq:dm_newton_r,eq:dm_newton_qj,eq:dm_constr_ind3} without prior index reduction. 
Here, we need to generalise the Lagrangian multipliers \index{Lagrangian multipliers} to fit into the kinetic framework.
The resulting characteristic flow has to satisfy the algebraic constraint \cref{eq:dm_constr_ind3},  which leads to the new concept of constrained characteristic flows. 
\index{Characteristic flow!constrained mean-field}
Since the constraint \cref{eq:dm_constr_ind3} is uniform for all $j\in\{1,\dots,N\}$, the notation of a differentiability index \cite{BrCP96,hairerNumericalSolutionDifferentialAlgebraic1989} carries over to constrained characteristic flows. \index{Partially kinetic systems!index} After two index reduction steps and elimination of multipliers, as in \cref{subsec:dm_index_reduction}, the constrained characteristic flow equations transform into the (unconstrained) characteristic flow equations \cref{eq:muscle_flow}.  In abstract terms, the index reduction and the mean-field limit commute, as summarised in \cref{fig:different_derivations}.

\begin{figure}[h]
	\centering
	\begin{tikzcd} [row sep=0.8cm, column sep=2.5cm]
		\fbox{\parbox{3cm}{\centering Differential-Algebraic Equation (index 3)}}
		\arrow{r}{\text{formal mean-field limit}}[swap]{\text{(see \cref{subsec:kin_char_flow_dae})}}
		\arrow{d}{\text{(see \cref{subsec:dm_index_reduction})}}[swap]{\text{index reduction}}
		&
		\fbox{\parbox{3cm}{\centering Constrained Mean-Field Characteristic Flow\\ (index 3)}}
		\arrow{d}{\text{(see \cref{subsec:kin_index_reduction})}}[swap]{\text{index reduction}}
		\\	
		\fbox{\parbox{3cm}{\centering Differential-Algebraic Equation (index 1)}}
		\arrow{d}{\text{(see \cref{subsec:dm_index_reduction})}}[swap]{\text{\parbox{1.4cm}{\centering elimination of multipliers}}}
		\arrow[r,dashrightarrow, "{\text{formal mean-field limit}}"]
		&
		\fbox{\parbox{3cm}{\centering Constrained Mean-Field Characteristic Flow\\ (index 1)}}
		\arrow{d}{\text{(see \cref{subsec:kin_index_reduction})}}[swap]{\text{\parbox{1.4cm}{\centering elimination of multipliers}}}
		\\
		\fbox{\parbox{3cm}{\centering Ordinary Differential Equation}}
		\arrow{r}{\text{formal mean-field limit}}[swap]{\text{(see \cref{subsec:kin_char_flow_ode})}}
		&
		\fbox{\parbox{3cm}{\centering Mean-Field Characteristic Flow}}
	\end{tikzcd}
	\caption{Different paths to derive the mean-field characteristic flow equations for \cref{eq:dm_newton_r,eq:dm_newton_qj,eq:dm_constr_ind3}.}
	\label{fig:different_derivations}
\end{figure}
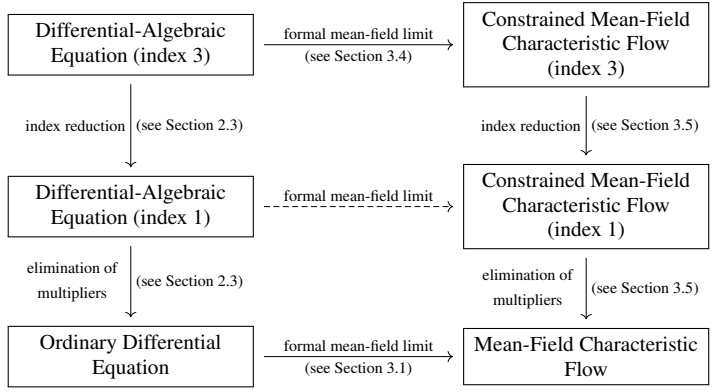

In order to formally derive the kinetic equations for the system of DAEs \cref{eq:dm_newton_r,eq:dm_newton_qj,eq:dm_constr_ind3}, we follow similar steps as in \cref{subsec:kin_char_flow_ode}. 
\begin{enumerate}[label=\textbf{Step \arabic*:}, ref={step \arabic*}, leftmargin=*]
\item Introduction of a scaling factor, \label{it:ind3_scaling_factor} \index{Scaling!mean-field}
\item Introduction of a measure $\pmst$ to describe the statical distribution of the cross-bridge extensions,
\item Derivation of the \emph{constrained} mean-field characteristic flow equations, which govern the evolution of the cross-bridge distribution $\pmst$,
\item Derivation of a kinetic balance law for the macroscopic system.  \label{it:ind3_kin_eff_law}
\end{enumerate}

Step 1: \index{Mean-field limit}
Exactly as in \cref{eq:scaling_ode_level}, we scale the mass and force of each cross-bridge via
\begin{align}
\tilde M_\qc \coloneqq \frac{\Nreal}{\N} \Mq \quad \text{and} \quad \tilde \Fq(\qd_j) \coloneqq \frac{\Nreal}{\N} \Fq(\qd_j) = -\frac{\Nreal}{\N} \kappaq \qd_j,
\end{align}
where $\Nreal$ is the fixed number of cross-bridges that a realistic system would have, and $\N$ is the number of cross-bridges of the systems for the limit process $\N \to \infty$.
After this rescaling, the system of DAEs \cref{eq:dm_newton_r,eq:dm_newton_qj,eq:dm_constr_ind3} is
\begin{align}
\Mr \ddot r &= - \kappar r - \sum_{i=1}^{\N} \cDphi^T \tilde \lambda_i , 
\label{eq:scaled_newton_r}
\\
\frac{\Nreal}{\N} \Mq \ddot \qd_j &= - \frac{\Nreal}{\N} \kappaq \qd_j - \tilde \lambda_j \quad \text{for~} j=1,\dots,\N, 
\label{eq:scaled_newton_qj}
\\
\qd_j + \cDphi r  &= \qdi_j + \cDphi \ri  \quad \text{for~} j=1,\dots,\N
\label{eq:scaled_constr_ind3}
\end{align}
where $\tilde \lambda_i \in \Rq$ are the Lagrangian multipliers of the scaled system \cref{eq:scaled_newton_r,eq:scaled_newton_qj,eq:scaled_constr_ind3}.

Step 2:
Precisely as in \cref{subsec:kin_char_flow_ode}, we assume that the cross-bridge distribution $\pmst \in \PRq{1}$ exists. In the following, we want to derive a law for the evolution of $\pmst$. 
The initial cross-bridge distribution is given by $\pmsin \in \PRq{1}$.

Step 3:
In contrast to the effective balance law \cref{eq:dm_newton_eff}, the balance law of the macroscopic system \cref{eq:scaled_newton_r} contains Lagrangian multipliers.
The value of the multiplier $\tilde \lambda_j$ in \cref{eq:scaled_newton_qj} represents a force acting on the $j$th cross-bridge. 
In a statistical description, there are no individually labelled cross-bridges any more. 
Instead, the characteristic flow $\cflow(t,\qi)$ tracks the dynamics of cross-bridges with initial condition $\qi \in \Rq$.
Hence, instead of one multiplier $\tilde \lambda_j(t)$ per cross-bridge, the kinetic description requires  a multiplier such as $\tilde \lambda(t,\cflow(t,\qi))$ for each initial state.

The characteristic flow $\cflow(t,\qi)$ and the generalised Lagrangian multipliers $\tilde \lambda(t,\qc)$ are solutions of the following family of DAEs
\begin{align}
\frac{\Nreal}{\N} \Mq \ddot \cflow(t,\qi) &= - \frac{\Nreal}{\N} \kappaq \cflow(t,\qi) - \tilde \lambda(t,\cflow(t,\qi)) \quad &\text{for all~} \qi \in \Rq, 
\label{eq:kin_newton_qj_unscaled}
\\
\cflow(t,\qi) + \cDphi r &= \qi + G_r \ri   &\text{for all~} \qi \in \Rq.
\label{eq:kin_constr_ind3_unscaled}
\end{align}
However, this formulation is not satisfying since we can formally compute 
\begin{align}
\tilde \lambda(t,\cflow(t,\qi)) &= -\frac{\Nreal}{\N} \left( \Mq \ddot \cflow(t,\qi) - \frac{\Nreal}{\N} \kappaq \cflow(t,\qi) \right) \\
&\to 0 \quad \text{(formally) for~} \N \to \infty.
\end{align}
As a result, the quantities of interest in the mean-field limit are not the Lagrangian multipliers $\tilde \lambda(t,\cflow(t,\qi))$ but the mean-field Lagrangian multipliers \index{Lagrangian multipliers!mean-field}
\[
\lambda_{\mathrm{mf}}(t,\cflow(t,\qi)) \coloneqq \frac{\N}{\Nreal} \tilde \lambda(t,\cflow(t,\qi)) \quad \text{for all~} \qi \in \Rq.
\]
With this definition, the system \cref{eq:kin_newton_qj_unscaled,eq:kin_constr_ind3_unscaled} reads
\begin{align}
\Mq \ddot \cflow(t,\qi) &= -\kappaq \cflow(t,\qi) - \lambda_{\mathrm{mf}}(t,\cflow(t,\qi)) \quad &\text{for all~} \qi \in \Rq, 
\label{eq:kin_newton_qj_first}
\\
\cflow(t,\qi) + \cDphi r &= \qi + G_r \ri   &\text{for all~} \qi \in \Rq.
\label{eq:kin_constr_ind3_first}
\end{align}
We refer to \cref{eq:kin_newton_qj_first,eq:kin_constr_ind3_first} as the \emph{constrained mean-field characteristic flow equations}.
For the same arguments as in \cref{subsec:kin_char_flow_ode} and \cref{fig:push_forward}, the evolution of the cross-bridge distribution $\pmst$ is the pushforward of $\pmsin$ under the constrained characteristic flow
\begin{align}
\pmst &\coloneqq \push{\cflow(t,\cdot)}{\pmsin}.
\label{eq:kin_pushforward_dae_first}
\end{align}

Step 4:
To replace \cref{eq:scaled_newton_r} by its kinetic counterpart, we apply the scaling $\lambda_j = \frac{N}{\Nreal} \tilde \lambda_j$, to obtain formally
\begin{align}
\Mr \ddot r 
&=-\kappar r - \sum_{i=1}^\N \cDphi^T \tilde \lambda_i
\notag \\
&= -\kappar r - \frac{\Nreal}{\N} \sum_{i=1}^\N \cDphi^T \lambda_i \notag \\
&= -\kappar r - \Nreal \left( \frac{1}{\N} \sum_{i=1}^\N \cDphi^T \lambda_{\mathrm{mf}}(t,\cflow(t,\qdi_i)) \right) \notag \\
&\to -\kappar r - \Nreal \int_\Rq \cDphi^T \lambda_{\mathrm{mf}}(t,\cflow(t,\qi)) \pmsin \quad \text{(formally) for~}\N \to \infty 
\label{eq:formal_limit_dae} \\
&= -\kappar r - \Nreal \int_\Rq \cDphi^T \lambda_{\mathrm{mf}}(t,\qc) \pmst
\notag
\end{align}
This formal argument yields the kinetic balance law
\begin{align}
\Mr \ddot r &= - \kappar r - \Nreal \int_\Rq \cDphi^T \lambda_{\mathrm{mf}}(t,\qc) \dq \pmst , 
\label{eq:kin_newton_r_first}
\end{align}
We remark that, in contrast to the derivation in \cref{subsec:kin_char_flow_ode}, the strong law of large numbers is not applicable in \cref{eq:formal_limit_dae}.

Finally, we arrive at the \emph{partially kinetic index-3 formulation} \index{Partially kinetic systems!DAE form} of \cref{eq:dm_newton_r,eq:dm_newton_qj,eq:dm_constr_ind3}  which is given by \cref{eq:kin_newton_r_first,eq:kin_newton_qj_first,eq:kin_constr_ind3_first,eq:kin_pushforward_dae_first}. We summarise these equations
\begin{align}
\Mr \ddot r &= - \kappar r - \Nreal \int_\Rq \cDphi^T \lambda_{\mathrm{mf}}(t,\qc) \dq \pmst , 
\label{eq:kin_newton_r} 
\\
\Mq \ddot \cflow(t,\qi) &= -\kappaq \cflow(t,\qi) - \lambda_{\mathrm{mf}}(t,\cflow(t,\qi)) \quad &\text{for all~} \qi \in \Rq, 
\label{eq:kin_newton_qj}
\\
\cflow(t,\qi) + \cDphi r &= \qi + G_r \ri  &\text{for all~} \qi \in \Rq,
\label{eq:kin_constr_ind3}
\\
\pmst &\coloneqq \push{\cflow(t,\cdot)}{\pmsin}
\label{eq:kin_pushforward_dae}
\end{align}
with initial conditions
\begin{align}
r(0) = \ri \in \Rr, \quad \dot r(0) = \si \in \Rr \quad \text{and} \quad \cflow(0,\qi) = \qi \quad \text{for all~} \qi \in \Rq.
\end{align}
Since the derivation is formal, regularity aspects are not considered. Indeed, the integral in \cref{eq:kin_newton_r} might not exist for $\pmsin \in \PRq{1}$.
It could be, that higher regularity of the initial data is necessary to ensure the existence of solutions of the partially kinetic index-3 formulation.
Hence, we do not claim that the system has solutions.

\subsection{Index Reduction and Elimination of the Multipliers for Partially Kinetic Systems}
\label{subsec:kin_index_reduction}

In \cref{subsec:dm_index_reduction}, for every cross-bridge index $j \in \{1,\dots,\N\}$, the same algebraic transformations recast the DAE formulation into the index-1 formulation. \index{Partially kinetic systems!index}
Despite  \cref{eq:kin_newton_qj,eq:kin_constr_ind3} being infinite-dimensional,
the index reduction is possible with the same algebraic transformations as in \cref{subsec:dm_index_reduction} but applied for each $\qi \in \Rq$.
The key assumption here is the uniformity of the constraints, since for each $\qi \in \Rq$ the algebraic constraint in \cref{eq:kin_constr_ind3} just differs by a shift.

The first two time derivatives of \cref{eq:kin_constr_ind3} are
\begin{align}
\dot \cflow(t,\qi) &= -\cDphi \dot r(t),
\label{eq:kin_constr_ind2}\\
\ddot \cflow(t,\qi) &= -\cDphi \ddot r(t).
\label{eq:kin_constr_ind1}
\end{align}
Now, we solve \cref{eq:kin_newton_qj} for the mean-field Lagrangian multipliers
\begin{align*}
\lambda_{\mathrm{mf}}(t,\cflow(t,\qi)) 
&= -\kappaq \cflow(t,\qi) - \Mq \ddot \cflow(t,\qi),
\end{align*}
and with \cref{eq:kin_constr_ind1} we obtain
\begin{align}
\lambda_{\mathrm{mf}}(t,\cflow(t,\qi))  &= -\kappaq \cflow(t,\qi) + \Mq G_r \ddot r(t).
\label{eq:muscle_lambda_formula_kin}
\end{align}
Finally, we substitute \cref{eq:muscle_lambda_formula_kin} into the kinetic balance law \cref{eq:kin_newton_r}
\begin{align}
\Mr \ddot r &= -\kappar r - \Nreal \int_\Rq G_r^T \lambda_{\mathrm{mf}}(t,\qc) \dq \pmst \notag \\ 
&= -\kappar r - \Nreal  \int_\Rq G_r^T (-\kappaq \qc + \Mq G_r \ddot r(t)) \dq \pmst \notag \\
&= -\kappar r + \Nreal  \int_\Rq G_r^T \kappaq \qc \dq \pmst - \left( \int_{\Rq} \Nreal  G_r^T \Mq G_r \dq \pmst  \right) \ddot r(t).
\label{eq:ind_red_eff_newton}
\end{align} 
The resulting equation \cref{eq:ind_red_eff_newton} is exactly the previously derived effective kinetic balance law for the macroscopic system in \cref{eq:muscle_newton_eff}.
The equations \cref{eq:ind_red_eff_newton,eq:kin_constr_ind2,eq:kin_pushforward_dae} are exactly the previously derived partially kinetic equations \cref{eq:muscle_newton_eff,eq:muscle_flow,eq:muscle_pushforward}.
This concludes the claim from the beginning of \cref{subsec:kin_char_flow_dae}: The formal mean-field limit and index-reduction commute for the DAE formulation \cref{eq:dm_newton_r,eq:dm_newton_qj,eq:dm_constr_ind3}.

\section{The Mean-Field Limit for Partially Kinetic Systems}
\label{sec:mean_field_limit}

Until now, all derivations of partially kinetic equations in this article have been formal. In this section, we will give a rigorous link between the discrete cross-bridge dynamics and their kinetic description.
In equation \cref{eq:strong_law_large_numbers_mean_field}, the strong law of large  numbers \index{Strong law of large numbers}\index{Mean-field limit} motivates the use of a mean-field force.
Nevertheless, the strong law of large numbers is not sufficient to show that the kinetic description is a good approximation for systems with many cross-bridges.
The law requires all cross-bridges to be stochastically independent, which is not easy to prove in general. 
Moreover, the argument in \cref{eq:strong_law_large_numbers_mean_field} only applies for a fixed time $t$.
The mean-field limit is a generalization of the strong law of large numbers and yields the required convergence result.

\cref{subsec:mean_field_numerical} motivates the mean-field limit with a numerical evidence.  \cref{subsec:mean_field_dobr} shows that the mean-field limit follows from Dobrushin's stability estimate for partially kinetic systems. Thereafter, \cref{subsec:proof_dobr} proofs the stability estimate.

\subsection{Numerical Evidence of the Mean-Field Convergence}
\label{subsec:mean_field_numerical}

A consequence of the mean-field limit is that the mean-trajectories of the macroscopic system in the discrete formulation tend towards the solution of the partially kinetic description, as explained in \cref{subsec:muscle_mf_pde}. We will now perform a numerical test for this claim.

In \cref{fig:linear_mc}, we perform numerical simulations of the ODE formulation \cref{eq:dm_newton_eff,eq:dm_constr_ind2}
with fixed initial conditions $\ri = 1$
and $\si = 0$.
The statistics of the initial cross-bridge extensions are given by  a normal distribution 
\[
\dif \pmsin(\qc) = \frac{1}{\sqrt{2\pi \sigma^2}} \exp\left({ - \frac{(\qc - m)^2}{2  \sigma} }\right) \dif \qc,
\]
with mean $m = -2$ and variance $\sigma^2 = 1$.
As initial data for the ODE formulation, we sample $n_{\text{samples}} = 100$ many initial cross-bridge extensions
$\qdi_{i,k},\dots,\qdi_{\N,m} \in \Rq$, with law
\[
\qdi_{i,k} \sim \pmsin \quad \text{for $i\in\{1,\dots,\N\}$, $k \in \{1,\dots,n_{\text{samples}}\}.$}
\]
The sampling  yields $n_{\text{samples}}$ different initial conditions and therefore, $n_{\text{samples}}$ trajectories of the macroscopic system and the cross-bridges.
The trajectories of the macroscopic system are plotted in \cref{fig:linear_mc}.

 To quantify the distance of single trajectories from the mean-trajectory,
 we estimate the variance of $r(t)$ with respect to the randomly sampled initial conditions. It turns out that the variance reduces asymptotically as fast as $\frac{1}{\N}$, which is displayed in \cref{fig:linear_mc}.
Therefore, even single trajectories are close to the mean-trajectory.

\begin{figure}[h]
	\centering
	\includegraphics[width=0.45\textwidth]{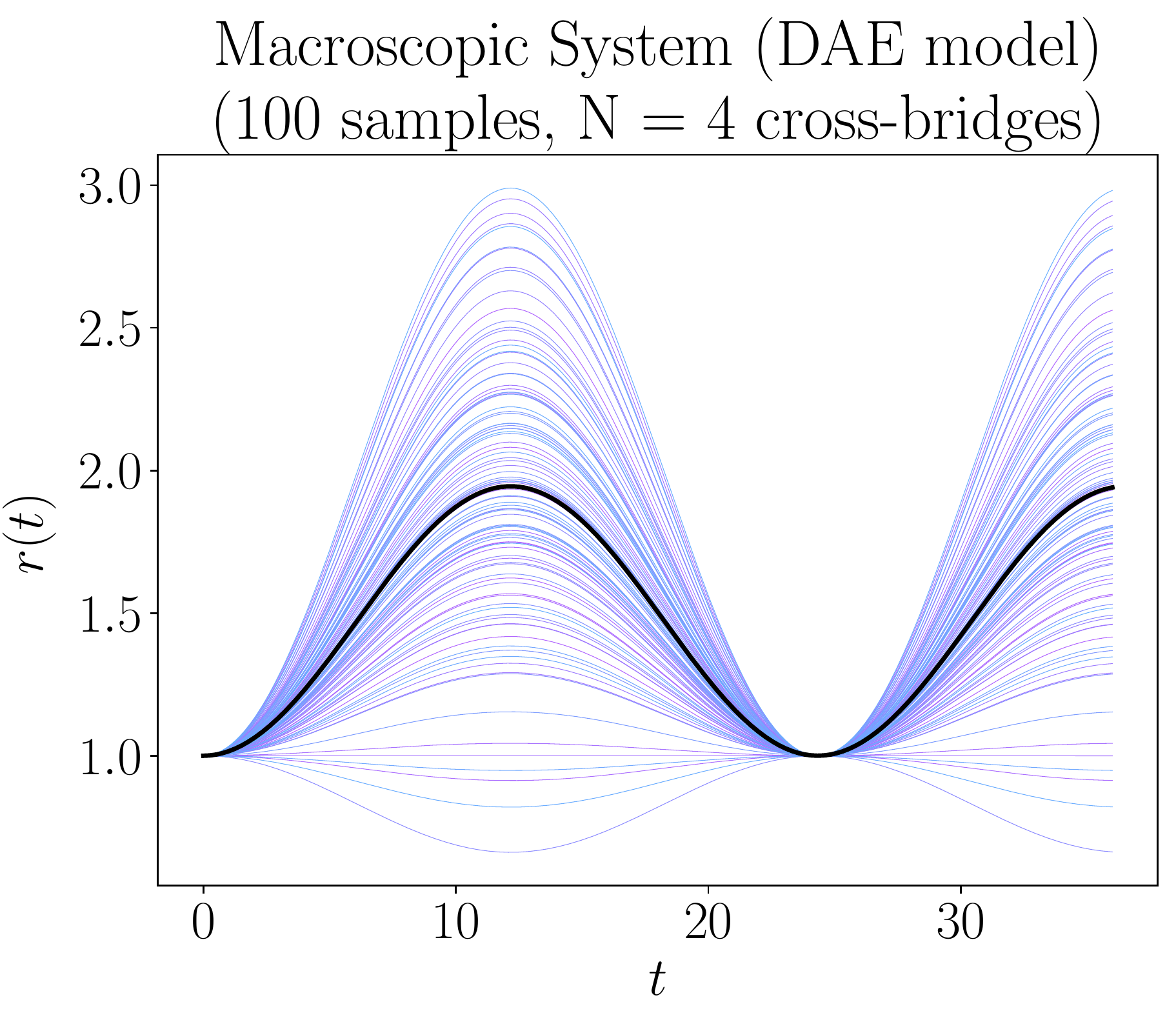}
	\hfill
	\includegraphics[width=0.45\textwidth]{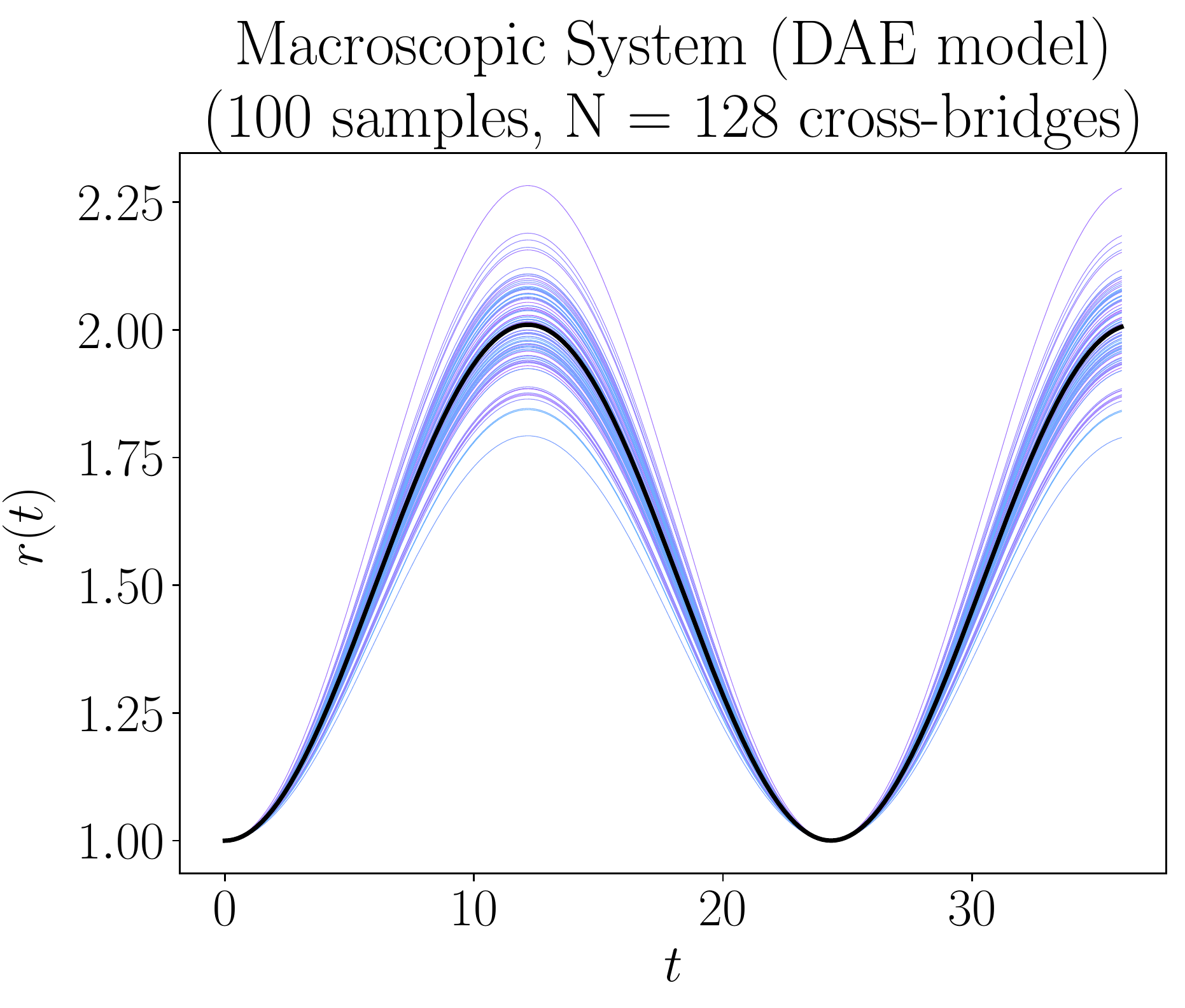}
	\\
	\includegraphics[width=0.45\textwidth]{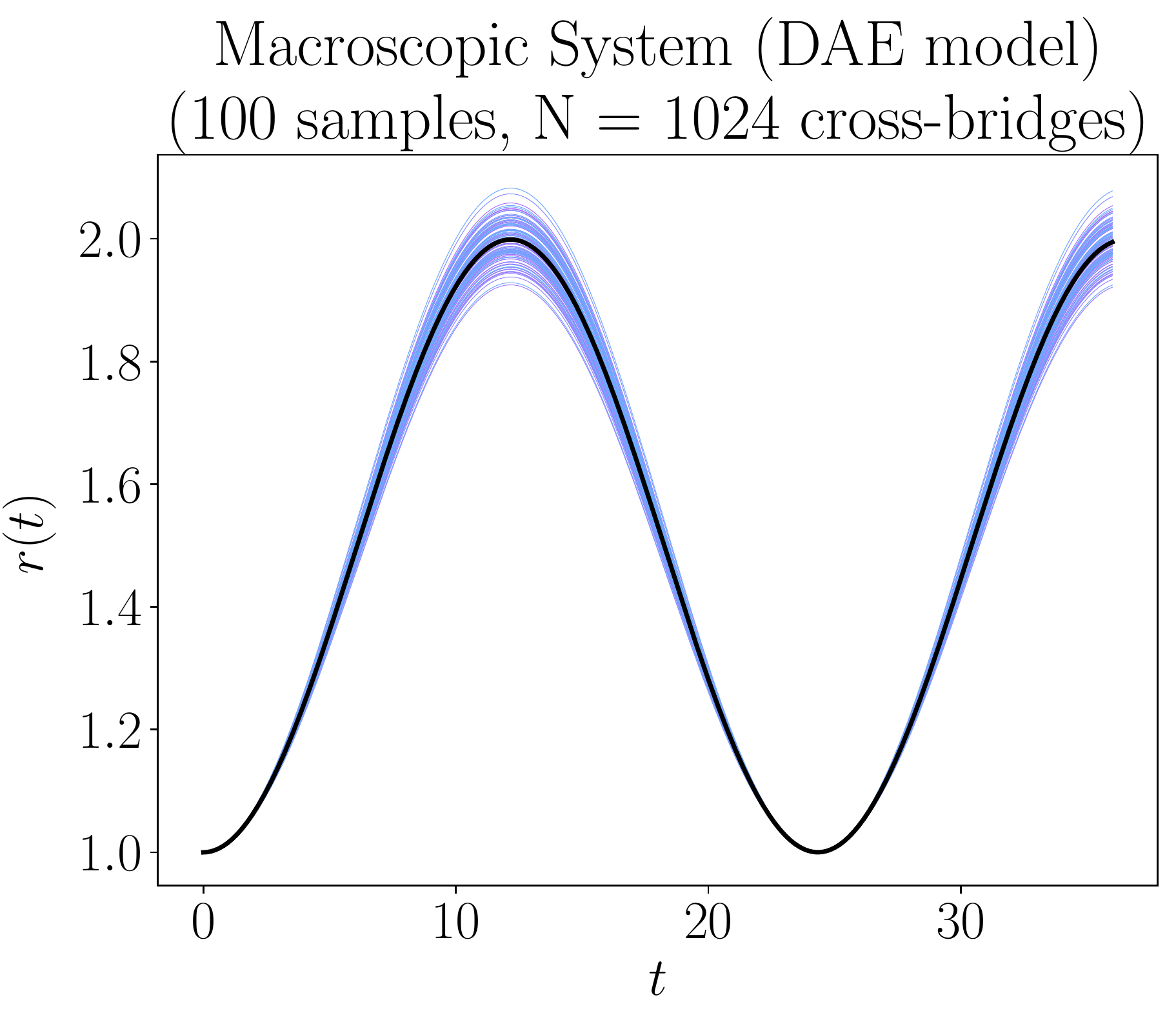}
	\hfill
	\includegraphics[width=0.45\textwidth]{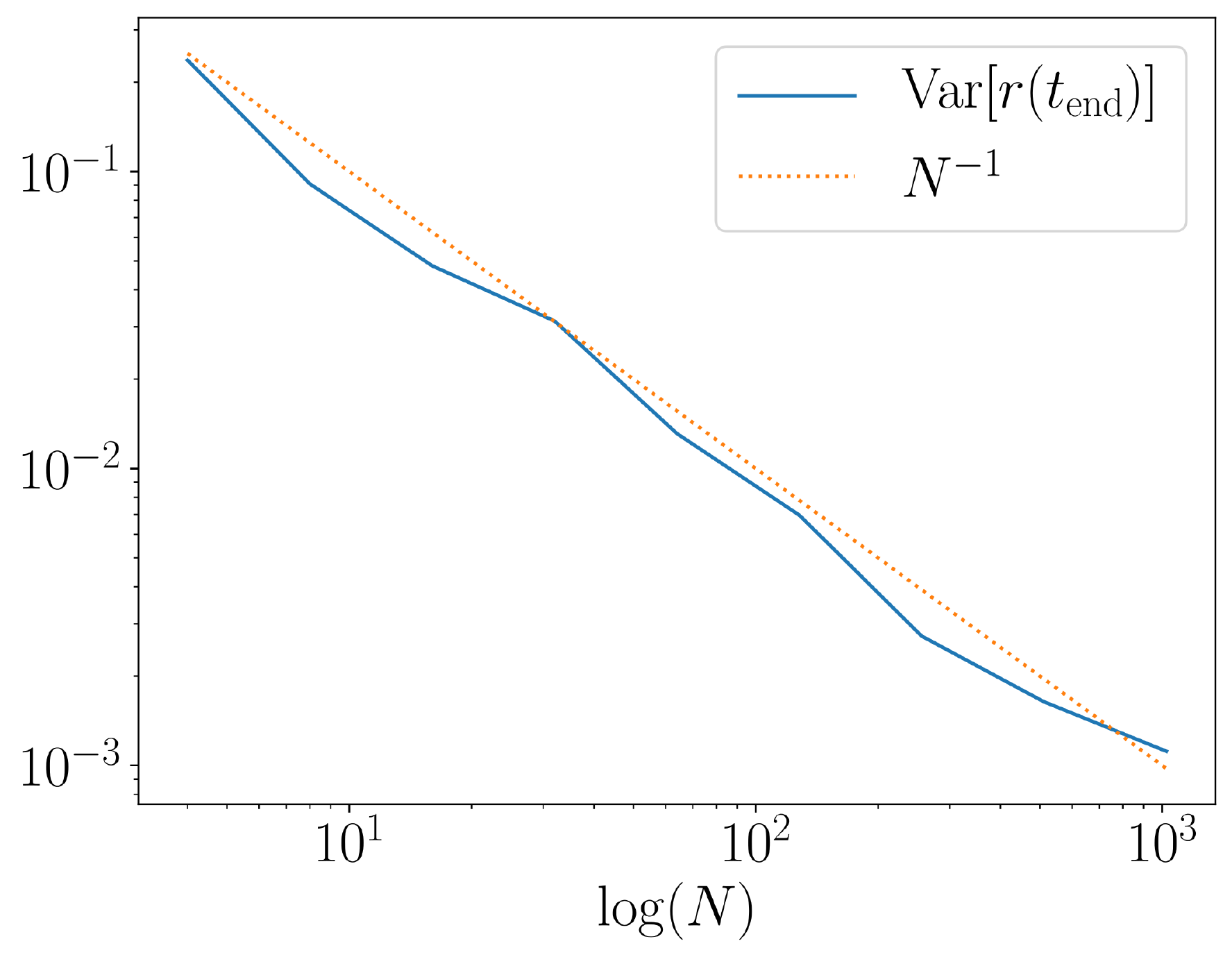}
	\caption{Samples of trajectories $r(t)$ of the macroscopic system for different numbers of cross-bridges $\N \in \{4,128,2048\}$ (top and bottom left). For an increasing number of particles $n$, the estimated variance of $r(t)$ decreases as $\frac{1}{n}$. (bottom right).}
	\label{fig:linear_mc}
\end{figure}

The mean-trajectories in \cref{fig:linear_mc} indeed converge towards the trajectory of the partially kinetic systems,
as visualised in \cref{fig:linear_mf_error}. We remark that this convergence also depends on the number of samples $n_{\text{samples}}$. Increasing the number of samples leads to faster convergence in \cref{fig:linear_mf_error}.
In a nonlinear setting, this convergence behaviour might change radically.

\begin{figure}[h]
	\centering
	\begin{minipage}{0.45\textwidth}
		\includegraphics[width=\textwidth]{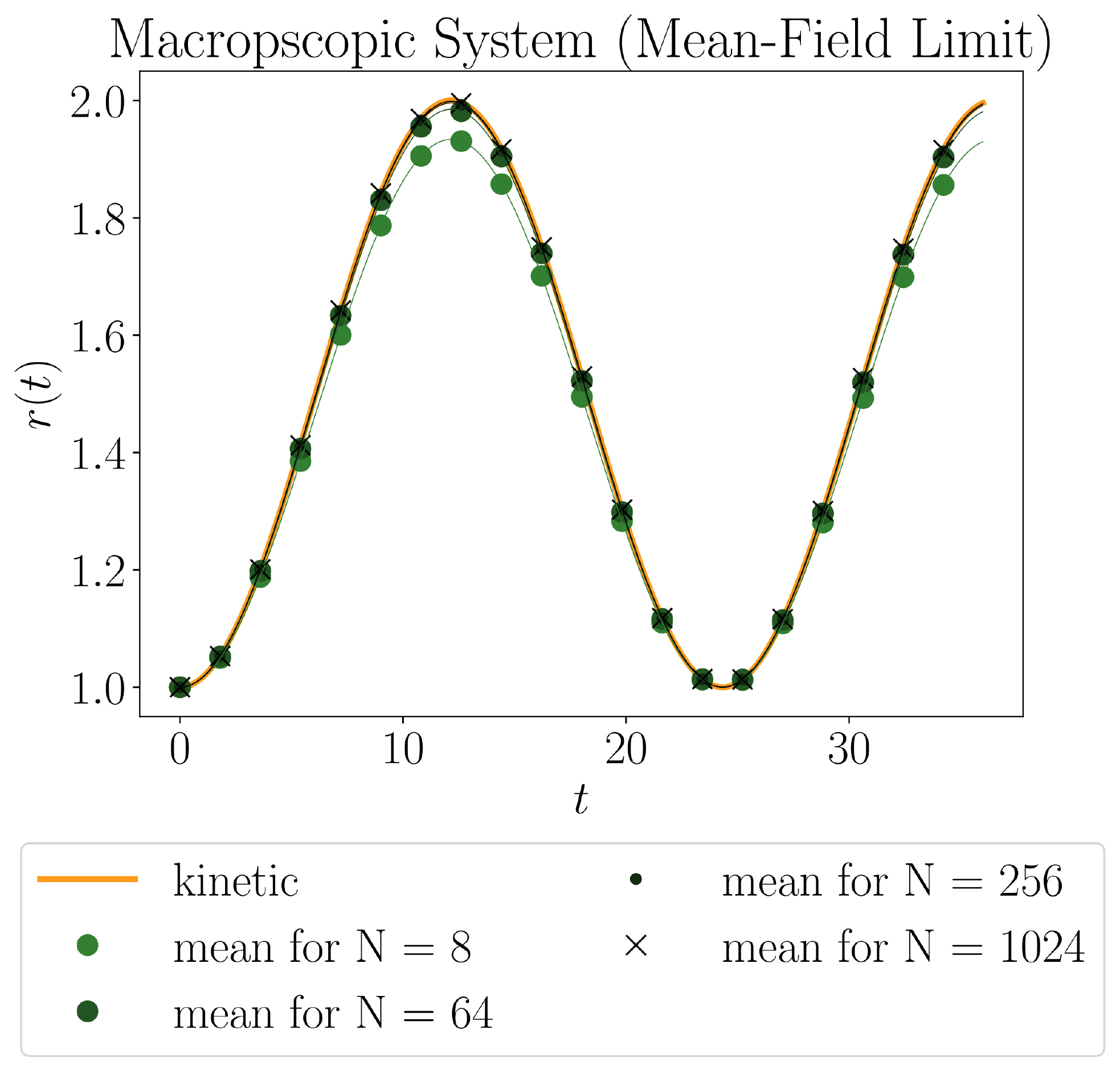}
	\end{minipage}
	\hfill
		\begin{minipage}{0.45\textwidth}
			\includegraphics[width=\textwidth]{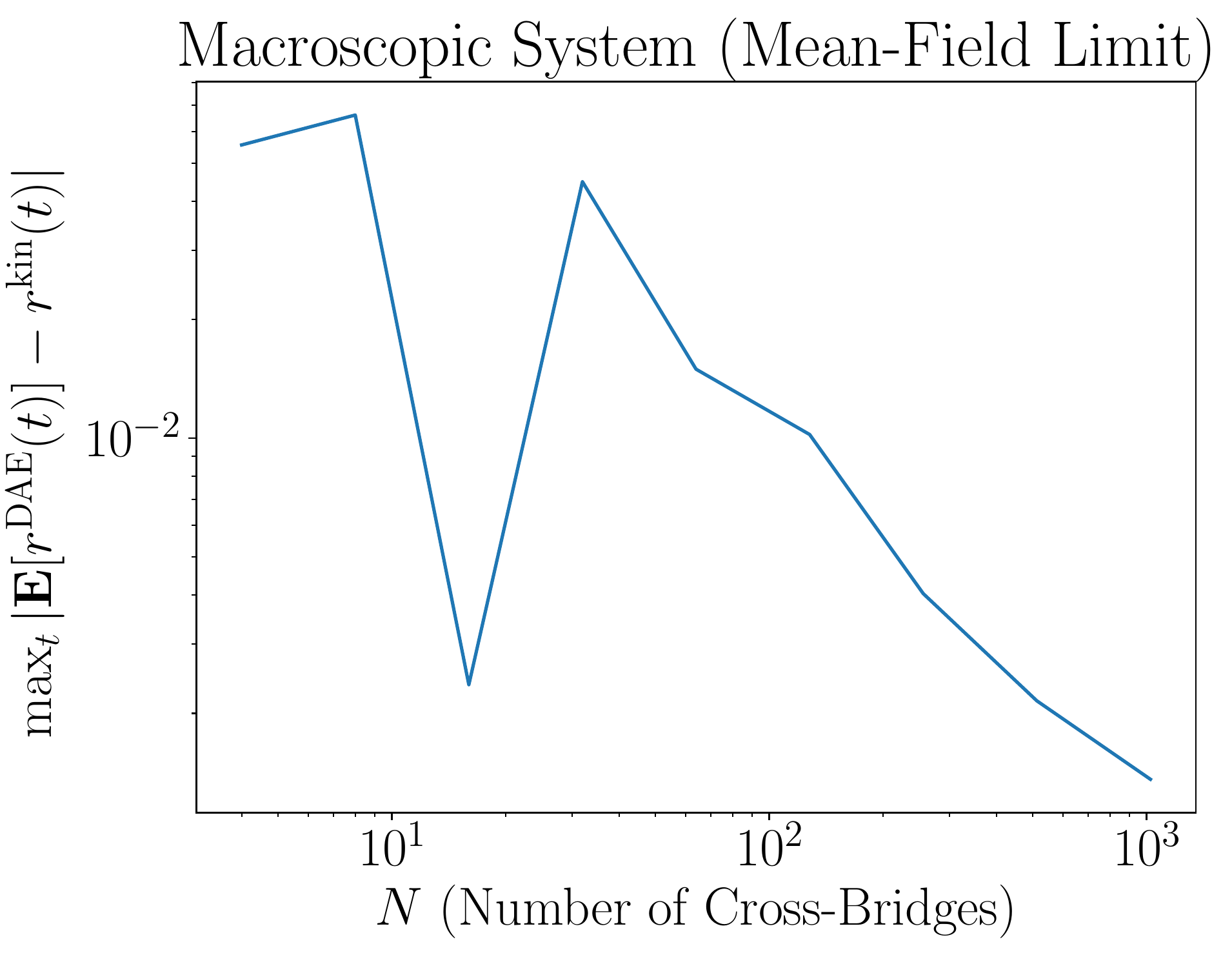}
		\end{minipage}
	\caption{The left plot compares the trajectory of the kinetic system and mean-trajectories of the discrete dynamics. This visual convergence is quantified in the right figure, which shows that the maximum distance between the mean-trajectories and the kinetic trajectory converges numerically.}
	\label{fig:linear_mf_error}
\end{figure}

\subsection{The Mean-Field Limit and Dobrushin's Stability Estimate for Linear Kinetic Systems}
\label{subsec:mean_field_dobr}

In this section, we will state an adapted version of Dobrushiun's stability estimate and define mean-field convergence for linear partially kinetic systems.
The proof of the stability estimate is given in \cref{subsec:proof_dobr}. The classical version of Dobrushin's stability estimate for the Vlasov equation can be found in \cite[theorem 1.4.3]{golseDynamicsLargeParticle2016}.

The numerical experiments in \cref{subsec:mean_field_numerical} indicate that the mean-trajectories of the macroscopic systems converge to the mean-field trajectory.
Now, to compare the distance between the discrete cross-bridge extensions $(\qd_1(t),\dots,\qd_\N(t))$ and the kinetic cross-bridge distribution $\pmst$, we need to choose a metric.
The empirical measure $\empn_{\qd_1(t),\dots,\qd_N(t)} \in \PRq{1}$ is a possibility to represent the discrete cross-bridge extensions in the space of probability measures, which leads to the new goal of finding a metric for the space $\PRq{1}$.
Moreover, such a metric should be compatible with the discrete distance between cross-bridge states. Therefore, we require
\begin{align}
\mathrm{dist}\left(\empn_{\qd_1,\dots,\qd_N}, \empn_{\tilde \qd_1,\dots,\tilde \qd_N} \right) \approx \frac{1}{\N} \sum_{i=1}^\N \norm{\qd_i - \tilde \qd_i}.
\label{eq:geom_motivation_W1_dist}
\end{align}
If $\qd_i \approx \tilde \qd_i$ holds, then the Monge-Kantorovich distance (also called Wasserstein distance) \index{Monge-Kantorovich distance}\index{Wasserstein metric|see {Monge-Kantorovich distance}} satisfies the geometric requirement \cref{eq:geom_motivation_W1_dist}.
For a detailed study of the Monge-Kantorovich distance, we refer to \cite[Chapter 6]{villaniOptimalTransportOld2008}.
For our purpose, the duality formula for the Monge-Kantorovich distance with exponent $1$ is most useful and hence serves as the definition here.

\begin{definition}[Monge-Kantorovich distance {\cite[Proposition 1.4.2]{golseDynamicsLargeParticle2016}}]
\index{Monge-Kantorovich distance}
\label{def:W1_dual}~\\
The Monge-Kantorovich distance with exponent $1$ is given by the formula
\begin{align}
W_1(\nu,\mu) =
\sup_{ \stackrel{\phi \in \LipRq}{\Lip(\phi) \leq 1} }
\abs{ \int_\Rq \phi(\qc) \dq \nu - \int_\Rq \phi(\qc) \dq \mu }, 
\label{eq:W1_dual}
\end{align}
with the notation 
\[
\Lip(\phi) \coloneqq \sup_{\stackrel{x, y \in \Rq}{x \neq y}} \frac{\abs{\phi(x) - \phi(y)}}{\norm{x-y}}
\]
and
\[
\LipRq \coloneqq \{ \phi : \Rq \to \RR \mid \Lip(\phi) < \infty \}.
\]
\index{Lipschitz continuous}
\end{definition}
The Monge-Kantorovich distance is a complete metric on $\PRq{1}$ \cite[Lemma 6.14]{villaniOptimalTransportOld2008}.

For the attached cross-bridge model with $\nq = 1$, the duality formula \cref{eq:W1_dual} with $\phi(\qc) = \qc$ is directly applicable to estimate the difference of the mean-field forces
\begin{align}
\norm{\fmf(\nu) - \fmf(\mu)}
&\leq \norm{\cDphi^T \kappaq} \norm{ \int_\Rq \qc \dq \nu - \int_\Rq \qc \dq \mu} \\
&\leq  \norm{\cDphi^T \kappaq} W_1(\nu,\mu).
\end{align}
This estimate is at the core of the relation between partially kinetic systems and the Monge-Kantorovich distance. \index{Stability}
Together with a classical stability estimate for ODEs, this yields the following theorem.

\begin{theorem}[Dobrushin's Stability Estimate for Linear Partially Kinetic Systems]
\label{thm:linear_dobrushin}~\\
\index{Stability!Dobrushin's estimate}\index{Dobrushin's stability estimate}
Suppose that for $i \in \{1,2\}$ the tuples $(r_i(t),\pms_i(t)) \in \Rr \times \PRq{1}$ are solutions of  
\cref{eq:muscle_newton_eff,eq:muscle_flow,eq:muscle_pushforward}
with initial conditions
\[
r_i(0) = \ri_i, \quad \dot r_i(0) = \si_i \quad \text{and} \quad 
\pms_i(0) = \pmsin_i.
\]
Then
\begin{align}
\norm{r_1(t) - r_2(t)} + \norm{\dot r_1(t) - \dot r_2(t)}
+ &W_1(\pms_1(t), \pms_2(t)) 
\notag \\
\MoveEqLeft \leq C e^{Lt} \left(
\norm{\ri_1 - \ri_2} + \norm{\si_1 - \si_2} 
+ W_1(\pmsin_1, \pmsin_2) \right),
\label{eq:dobr_stab_estm}
\end{align}
for some constants $L, C > 0$ which are independent of the initial conditions.
\end{theorem}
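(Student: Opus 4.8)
The plan is to reduce the whole estimate to a single Grönwall argument for the macroscopic variable $r$, exploiting the fact that for the linear constraint the characteristic flow is an explicit rigid translation. First I would integrate the flow equation \cref{eq:muscle_flow}: from $\dot \cflow(t,\qi) = -\cDphi \dot r(t)$ with $\cflow(0,\qi) = \qi$ one obtains $\cflow(t,\qi) = \qi - \cDphi\,(r(t) - \ri)$, exactly as in the proof of \cref{lem:insert_emperical}. Consequently, by the pushforward relation \cref{eq:muscle_pushforward}, each distribution $\pms_i(t)$ is nothing but the initial measure $\pmsin_i$ rigidly translated by the vector $v_i(t) \coloneqq -\cDphi\,(r_i(t) - \ri_i)$.

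The crucial step is to turn this translation structure into a bound on the Monge-Kantorovich distance between the two cross-bridge distributions in terms of the macroscopic displacement alone. I would use two elementary properties of $W_1$ that follow immediately from the duality formula \cref{eq:W1_dual}: translating both arguments by the same vector leaves $W_1$ unchanged, while $W_1$ between two translates of one and the same measure equals the Euclidean distance between the translation vectors. Combining these with the triangle inequality gives
\begin{align*}
W_1(\pms_1(t),\pms_2(t))
&\leq \norm{v_1(t) - v_2(t)} + W_1(\pmsin_1,\pmsin_2) \\
&\leq \norm{\cDphi}\,\big(\norm{r_1(t) - r_2(t)} + \norm{\ri_1 - \ri_2}\big) + W_1(\pmsin_1,\pmsin_2).
\end{align*}
This inequality decouples the infinite-dimensional distributional part from the finite-dimensional dynamics: it shows that the gap between the two distributions is controlled by the macroscopic gap $\norm{r_1(t) - r_2(t)}$ plus the initial data.

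Next I would set up the difference equation for the macroscopic variable. Since $\Mr$ and $\Mq$ are positive definite, the effective mass $\meff = \Mr + \Nreal\,\cDphi^T \Mq \cDphi$ is positive definite, hence invertible, so \cref{eq:muscle_newton_eff} reads $\ddot r_i = \meff^{-1}\feff(r_i,\pms_i(t))$ with $\feff(r,\mu) = -\kappar r + \fmf(\mu)$. Subtracting the two equations and invoking the Lipschitz estimate $\norm{\fmf(\nu) - \fmf(\mu)} \leq \norm{\cDphi^T \kappaq}\,W_1(\nu,\mu)$ stated just above the theorem, then absorbing the $W_1$-term via the inequality of the previous paragraph, I would arrive at a bound of the form $\norm{\ddot r_1(t) - \ddot r_2(t)} \leq A\,\norm{r_1(t) - r_2(t)} + B\,\big(\norm{\ri_1 - \ri_2} + W_1(\pmsin_1,\pmsin_2)\big)$ for suitable constants $A,B > 0$.

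Finally I would rewrite this second-order difference equation as a first-order system in $X(t) \coloneqq (r_1(t) - r_2(t),\ \dot r_1(t) - \dot r_2(t))$, yielding $\norm{\dot X(t)} \leq L\,\norm{X(t)} + B\,\big(\norm{\ri_1 - \ri_2} + W_1(\pmsin_1,\pmsin_2)\big)$ for a suitable $L > 0$, and close the argument with Grönwall's inequality; since $\norm{X(0)} = \norm{\ri_1 - \ri_2} + \norm{\si_1 - \si_2}$, this gives the exponential bound for $\norm{r_1(t) - r_2(t)} + \norm{\dot r_1(t) - \dot r_2(t)}$, and feeding it back into the $W_1$-estimate of the second paragraph controls $W_1(\pms_1(t),\pms_2(t))$ as well, producing \cref{eq:dobr_stab_estm}. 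The main obstacle is only apparent: $\ddot r$ depends on $W_1(\pms_1,\pms_2)$, which in turn depends on $r$, so the coupling looks circular. It closes cleanly because the translation estimate shows $W_1$ depends on $r$ itself and not on its derivatives, making the coupled system effectively lower-triangular for Grönwall; the only genuine work is deriving the two translation properties of $W_1$ rigorously from \cref{def:W1_dual}.
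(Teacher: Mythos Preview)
Your proposal is correct and follows essentially the same route as the paper: explicit integration of the flow to a rigid translation, the translation estimate for $W_1$ (the paper's \cref{lem:W1_shift}), and a Grönwall-type bound for the macroscopic difference. The only cosmetic difference is that the paper first computes the first moment of $\pmst$ explicitly to recast \cref{eq:muscle_newton_eff} as an ODE parametrised by $\pmsin_i$ and then invokes the Fundamental Lemma (\cref{lem:fundamental_ode}), whereas you keep $\pms_i(t)$ in the equation and close the loop via your $W_1$-bound before applying Grönwall directly.
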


The proof of \cref{thm:linear_dobrushin} is content of \cref{subsec:proof_dobr}.
\color{black}

Dobrushin's stability estimate for linear partially kinetic systems \cref{eq:dobr_stab_estm} provides a concrete answer to the approximation quality of the kinetic description.
\cref{lem:insert_emperical} shows that solutions $(r(t),\qd_{1}(t),\dots,\qd_{N}(t))$ of the discrete system
\cref{eq:dm_newton_eff,eq:dm_constr_ind1} yield a solution of the kinetic formulation with initial data $\pmsin = \empn_{\qdi_{1},\dots,\qdi_{N}}$.
If we increase the number of cross-bridges such that the empirical measures \index{Empirical measure} converge \index{Convergence!mean-field}\index{Mean-field limit} to a probability distribution $\pmsin \in \PRq{1}$, i.e.
\[
W_1(\empn_{\qdi_1,\dots,\qdi_N}, \pmsin) \to 0, \quad \text{for~} N \to \infty,
\]
then \cref{eq:dobr_stab_estm} provides a bound for the approximation error \index{Stability}
\begin{align}
\norm{r_N(t) - r(t)}
+
\norm{s_N(t) - s(t)}
+W_1(\empn_{Q_1(t),\dots,Q_N(t)}, \pmst) 
&\leq C e^{tL} 
W_1(\empn_{\qdi_1,\dots,\qdi_N}, \pmsin) \notag\\
& \to 0, \quad \text{for~} N \to \infty.
\label{eq:mean_field_limit}
\end{align}
This estimate provides a rigorous argument for the use of kinetic models.
Moreover, if the initial distribution of $N$ cross-bridges $(\qd_j)_{j=1,\dots,N}$ is very close to  a continuous distribution $\pmsin$, then approximation error of the kinetic description is bounded.

Using tools from probability theory and functional analysis, the topology for the convergence in \cref{eq:mean_field_limit} can be refined to the topology of weak convergence \index{Convergence!weak} of measures (also called convergence in distribution for probability measures) \cite[theorem 6.9]{villaniOptimalTransportOld2008}, \cite[Lemma 1.4.6]{golseDynamicsLargeParticle2016}. This leads to the precise definition of mean-field convergence \cite{jabinReviewMeanField2014}. We omit these details here as they are out of scope for the present paper.

\subsection{Proof of Dobrushin's Stability Estimate for Linear Partially Kinetic Systems}
\label{subsec:proof_dobr}

This section provides a proof of \cref{thm:linear_dobrushin}. Compared to classical proofs of Dobrushin's stability estimate for the Vlasov equation \cite{golseDynamicsLargeParticle2016,jabinReviewMeanField2014}, the particular structure of linear partially kinetic system allows for a more elementary proof, which essentially reuses the stability estimate for ODEs with respect to initial conditions and parameters, as stated in \cref{lem:fundamental_ode}.

In contrast to many other metrics for probability measures, the Monge-Kantorovich distance 
is motivated by geometry. In particular, shifting two measures increases their distance at most by the length of the shift, see \cref{lem:W1_shift}. Due to this property, the proof of \cref{thm:linear_dobrushin} boils down to the ODE estimate from \cref{lem:fundamental_ode}.

\begin{lemma}[Shift of the Monge-Kantorovich distance]
\index{Invariance!shift}
\label{lem:W1_shift}
Let $\mu, \nu \in \PRq{1}$ be two probability measures with finite first moment.
For $w \in \Rq$ we define the shift mapping as $T_w: \Rq \to \Rq : \qc \mapsto \qc + w$.
Then for $w_1,w_2 \in \Rq$ it holds
\[
W_1(\push{T_{w_1}} \mu, \push{T_{w_2}}{\nu}) \leq W_1(\mu,\nu) + \norm{w_2 - w_1}.
\]
\end{lemma}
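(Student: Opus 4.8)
The plan is to argue entirely through the dual (Kantorovich--Rubinstein) characterisation of $W_1$ from \cref{def:W1_dual}, because it reduces the statement to a uniform estimate for integrals of $1$-Lipschitz test functions, and these interact transparently with shifts. The only preliminary fact I need is the change-of-variables identity for the pushforward,
\[
\int_\Rq \phi(\qc) \dq{\push{T_w}{\mu}} = \int_\Rq \phi(\qc + w) \dq{\mu},
\]
valid for every $\phi \in \LipRq$ and every $w \in \Rq$, together with the remark that $T_w$ is an isometry, so that $\push{T_w}{\mu}, \push{T_w}{\nu} \in \PRq{1}$ and all integrals below are finite. Fixing an arbitrary admissible test function $\phi$ with $\Lip(\phi) \leq 1$, I would rewrite the relevant difference as $\int_\Rq \phi(\qc + w_1) \dq{\mu} - \int_\Rq \phi(\qc + w_2) \dq{\nu}$.

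The key manoeuvre is an add-and-subtract step through the intermediate integral $\int_\Rq \phi(\qc + w_1) \dq{\nu}$, which splits the difference into
\[
\Bigl( \int_\Rq \phi(\qc+w_1) \dq{\mu} - \int_\Rq \phi(\qc+w_1) \dq{\nu} \Bigr) + \Bigl( \int_\Rq \phi(\qc+w_1) \dq{\nu} - \int_\Rq \phi(\qc+w_2) \dq{\nu} \Bigr).
\]
For the first bracket, I would observe that the translate $\qc \mapsto \phi(\qc + w_1)$ has the same Lipschitz seminorm as $\phi$, hence is itself admissible in \cref{eq:W1_dual}; this bounds the first bracket in absolute value by $W_1(\mu,\nu)$. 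For the second bracket, the $1$-Lipschitz property of $\phi$ gives the pointwise estimate $\abs{\phi(\qc+w_1) - \phi(\qc+w_2)} \leq \norm{w_2 - w_1}$, and integrating against the probability measure $\nu$ (of total mass one) bounds it by $\norm{w_2 - w_1}$.

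Combining the two bounds yields the uniform estimate $\abs{\int_\Rq \phi \dq{\push{T_{w_1}}{\mu}} - \int_\Rq \phi \dq{\push{T_{w_2}}{\nu}}} \leq W_1(\mu,\nu) + \norm{w_2 - w_1}$, independent of $\phi$; taking the supremum over all $1$-Lipschitz $\phi$ on the left-hand side and invoking \cref{eq:W1_dual} gives the claim. I do not expect any step to pose a genuine obstacle: the argument is essentially the translation invariance of the dual formula combined with the triangle inequality. The single point deserving explicit verification is that $\Lip\bigl(\phi(\placeholder + w_1)\bigr) = \Lip(\phi)$, which is what legitimises reusing the dual formula for the shifted test function and is immediate from the definition of the Lipschitz seminorm.
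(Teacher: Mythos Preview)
Your proof is correct and follows essentially the same route as the paper: both use the duality formula \cref{eq:W1_dual}, the change-of-variables identity for pushforwards, an add-and-subtract step, and the pointwise Lipschitz bound integrated against a probability measure. The only organisational difference is that the paper first invokes shift-invariance of $W_1$ to reduce to the case $w_1 = 0$, whereas you handle both shifts simultaneously by observing that the translate $\phi(\placeholder + w_1)$ is again an admissible test function; these are two phrasings of the same fact.
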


\cref{lem:W1_shift} is usually a special case of more general theorems on the relation between geodesic flows and the Monge-Kantorovich metric \cite[Chapter 8]{villaniOptimalTransportOld2008}. For completeness, we give an elementary proof.

\begin{proof}
The Monge-Kantorovich distance is invariant with respect to shifts
$W_1(\push{T_w}{\nu}, \push{T_w}{\mu}) = W_1(\nu, \mu)$, hence we can assume $w_1 = 0$.
For an arbitrary vector $w \in \Rq$, we use the duality formula \cref{eq:W1_dual} and compute
\begin{align}
W_1(\nu, \push{T_w}{\mu}) 
&=
\sup_{\Lip(\phi)\leq1}
\abs{ \int_\Rq \phi(\qc) \dq \nu - \int_\Rq \phi(\qc) \dq {(\push{T_w}{\mu})} } \notag \\
&=
\sup_{\Lip(\phi)\leq1}
\abs{ \int_\Rq \phi(\qc) \dq \nu - \int_\Rq \phi(T_w(\qc)) \dq \mu} \notag  \\
&=
\sup_{\Lip(\phi)\leq1}
\abs{ \int_\Rq \phi(\qc) \dq \nu - \int_\Rq \phi(\qc) \dq \mu + \int_\Rq \phi(\qc) - \phi(\qc + w) \dq \mu} \notag  \\
&\leq W_1(\nu,\mu) + 
\sup_{\Lip(\phi)\leq1}
\abs{\int_\Rq \phi(\qc) - \phi(\qc + w) \dq \mu}
\end{align}
where we have applied the transformation formula to the integral  of the pushforward operator.
Next, we employ $\Lip(\phi) \leq 1$ to obtain the upper bound
\begin{align*}
\abs{\int_\Rq \phi(\qc) - \phi(\qc + w) \dq \mu} 
&\leq 
\int_\Rq \abs{\phi(\qc) - \phi(\qc + w)} \dq \mu \\
&\leq 
\int_\Rq \norm{w} \dq \mu = \norm{w}.
\end{align*}
We conclude 
\begin{align}
W_1(\nu, \push{T_w}{\mu}) \leq W_1(\nu, \mu) + \norm{w}.
\label{eq:W1_shift_ineq}
\end{align}
\qed
\end{proof}

\begin{theorem}[{The \textquote{Fundamental Lemma}, {\cite[theorem 10.2]{hairerSolvingOrdinaryDifferential2009}}}]
\index{Stability!ODE}
\label{lem:fundamental_ode}
Suppose that for $i \in \{1,2\}$ the functions $x_i(t)$ solve
\begin{align}
\dot x_i(t) &= f_i(x_i(t)),\\
 x_i(0) &= \xin_i.
\end{align}
If for some constants $\varrho, \varepsilon, L > 0$ the following bounds hold
\begin{enumerate}[label=\roman*)]
\item $\norm{\xin_1 - \xin_2} \leq \varrho$,
\item $\norm{f_1(x) - f_2(x)} \leq \varepsilon \quad \text{for all~} x \in \RR^n$,
\item $\norm{f_1(a) - f_1(b)} \leq L \norm{a-b} \quad \text{for all~} a,b \in \RR^n$,
\end{enumerate}
then
\begin{align}
\norm{x_1(t) - x_2(t)} \leq \varrho e^{L t} + \frac{\varepsilon}{L} \left( e^{Lt} - 1 \right).
\end{align}
\end{theorem}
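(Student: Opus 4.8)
The plan is to reduce the statement to a scalar Grönwall-type integral inequality for $g(t) \coloneqq \norm{x_1(t) - x_2(t)}$ and then integrate explicitly. First I would rewrite both solutions in integral form,
\[
x_i(t) = \xin_i + \int_0^t f_i(x_i(s)) \dif s,
\]
and subtract the two identities to obtain
\[
x_1(t) - x_2(t) = (\xin_1 - \xin_2) + \int_0^t \left( f_1(x_1(s)) - f_2(x_2(s)) \right) \dif s.
\]

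The key algebraic step is to insert the telescoping term $\pm f_1(x_2(s))$ into the integrand, splitting it as
\[
f_1(x_1(s)) - f_2(x_2(s)) = \left( f_1(x_1(s)) - f_1(x_2(s)) \right) + \left( f_1(x_2(s)) - f_2(x_2(s)) \right).
\]
The first bracket is controlled by the Lipschitz hypothesis iii), giving $\norm{f_1(x_1(s)) - f_1(x_2(s))} \leq L\, g(s)$, whereas the second bracket is bounded uniformly by the perturbation hypothesis ii), so that $\norm{f_1(x_2(s)) - f_2(x_2(s))} \leq \varepsilon$. Combining these with the initial bound i) via the triangle inequality yields the integral inequality
\[
g(t) \leq \varrho + \varepsilon t + L \int_0^t g(s) \dif s.
\]

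To close the argument I would apply Grönwall's lemma to the majorant $\phi(t) \coloneqq \varrho + \varepsilon t + L \int_0^t g(s)\dif s$, which satisfies $g \leq \phi$ and is differentiable with $\phi'(t) = \varepsilon + L\, g(t) \leq \varepsilon + L\, \phi(t)$. Multiplying the differential inequality $\phi'(t) - L\phi(t) \leq \varepsilon$ by the integrating factor $e^{-Lt}$ rewrites the left-hand side as $\frac{\dif}{\dif t}\left( e^{-Lt}\phi(t) \right)$; integrating from $0$ to $t$ and using $\phi(0) = \varrho$ gives
\[
e^{-Lt}\phi(t) \leq \varrho + \frac{\varepsilon}{L}\left( 1 - e^{-Lt} \right),
\]
which rearranges to the claimed bound $g(t) \leq \varrho\, e^{Lt} + \frac{\varepsilon}{L}(e^{Lt} - 1)$.

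The argument is entirely classical, so I do not expect a genuine obstacle; the only point requiring care is the telescoping split, which is precisely what separates the truly Lipschitz contribution (estimated through iii)) from the perturbation of the vector field (estimated through ii)). I would also note that assumptions ii)--iii) are global, so the estimate holds on the whole interval of joint existence without any a priori confinement of the trajectories, and that the monotonicity $g \leq \phi$ used in the final step is exactly what legitimises replacing the integral inequality by the differential one.
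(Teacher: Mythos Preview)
Your argument is correct and is exactly the classical proof of this ODE stability lemma: integral form, telescoping split into a Lipschitz part and a perturbation part, and then a Gr\"onwall step via the integrating factor $e^{-Lt}$. There is nothing to compare against here, since the paper does not prove this statement itself but merely quotes it from \cite[Theorem 10.2]{hairerSolvingOrdinaryDifferential2009} as a tool for the proof of \cref{thm:linear_dobrushin}.
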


\cref{thm:linear_dobrushin} can be considered as a generalisation of \cref{lem:fundamental_ode}.

\begin{proof}[of \cref{thm:linear_dobrushin}]
First, we will reformulate the ODE formulation \cref{eq:muscle_newton_eff,eq:muscle_flow,eq:muscle_pushforward} such that \cref{lem:fundamental_ode} is applicable.
The characteristic flow is explicitly given by
\begin{align}
\cflow(t,\qi) \coloneqq -\cDphi (r(t) - \ri) + \qi,
\label{eq:cflow_explicit}
\end{align}
which is the unique solution of \cref{eq:muscle_flow} and the initial condition.
With \cref{eq:cflow_explicit}, we can compute the first moment of $\pmst$ as
\begin{align}
\int_\Rq \qc \dq \pmst
 &= \int_\Rq \cflow(t,\qcp) \dqp \pmsin  \notag \\
&=  \cDphi (r(t) - \ri) + \int_\Rq \qcp \dqp \pmsin.
\label{eq:dobr_first_moment}
\end{align}
As a result, the effective force can be written as
\begin{align}
\feff(r(t),\pmst) 
&= -\kappar r(t) + \Nreal \cDphi^T \kappaq  \int_\Rq \qc \dq \pmst \notag\\
&= -\kappar r(t) + \Nreal \cDphi^T \kappaq \left( \cDphi (r(t) - \ri) + \int_\Rq \qc \dq \pmsin \right) \notag\\
&\eqqcolon \tilde f_{\mathrm{eff}}(r(t); \pmsin). \notag
\end{align}

Now, we consider two different initial conditions $(\ri_1,\si_1,\pmsin_1) \in \Rr \times \Rr \times \PRq{1}$ and $(\ri_2,\si_2,\pmsin_2) \in \Rr \times \Rr \times \PRq{1}$.
To prepare the application of \cref{lem:fundamental_ode}, we transform $\meff \ddot r_i = \feff$ \cref{eq:muscle_newton_eff} into a first order ODE,
with $x_i(t) = ( r_i(t), \dot r_i(t) )^T \in \RR^{2 \nr}$ and
\begin{align}
\dot x_i(t) &= \begin{pmatrix}
\dot r_i(t) \\
\meff^{-1} \tilde f_{\mathrm{eff}}(r(t); \pmsin_i)
\end{pmatrix}
\eqqcolon f_i(x_i(t)) \quad &\text{for~} i \in \{1,2\},\\
x_i(0) &= (\ri_i,\si_i)^T  \quad &\text{for~} i \in \{1,2\}. \notag
\end{align} 
We remark that $\meff = \Mr + \Nreal \cDphi^T \Mq \cDphi$ is invertible, since $\Mr$ and $\Mq$ are defined to be positive definite, see \cref{sec:dae_model}.

The difference between the right-hand sides $f_i$ at a fixed state $x = (r,s) \in \RR^{2 \nr}$ is
\begin{align}
\norm{f_1(r) - f_2(r)} \notag 
&= \norm{ \meff^{-1} \left(  \tilde f_{\mathrm{eff}}(r; \pmsin_1) -  \tilde f_{\mathrm{eff}}(r; \pmsin_2) \right) } \notag \\
&=\norm{\meff^{-1} \Nreal \cDphi^T \kappaq \left( \cDphi (\ri_1 - \ri_2) + \int_\Rq \qc \dq {\pmsin_1} - \int_\Rq \qc \dq {\pmsin_2} \right)} \notag \\
&\eqqcolon \varepsilon.
\label{eq:dobr_vareps}
\end{align}
Next, we compute the Lipschitz constant of $f_1$.
The partial derivatives are 
\[
\dpd{f_1}{\dot r_1} = 1 \quad \text{and} \quad \dpd{f_1}{r_1} = \meff^{-1} ( \kappar + \Nreal \cDphi^T \kappaq \cDphi ),
\]
which implies that 
\begin{align*}
L &\coloneqq \norm{\meff^{-1}} \left( \norm{\kappar} +  \Nreal \norm{\cDphi^T \kappaq \cDphi} \right) + 1
\end{align*}
is a Lipschitz constant for $f_1$.

Then \cref{lem:fundamental_ode} yields
\begin{align}
\sqrt{ \norm{r_1(t) - r_2(t)}^2 + \norm{s_1(t) - s_2(t)}^2}
&\leq
\varrho e^{L t}
+ \frac{\varepsilon}{L} \left( e^{L t} - 1 \right)
\label{eq:dobr_fund_estm}
\end{align}
with
\begin{align}
\varrho &\coloneqq \norm{x_1(0) - x_2(0)} = 
\sqrt{ \norm{\ri_1 - \ri_2}^2 + \norm{\si_1 - \si_2}^2}.
\end{align}

Next, we apply the duality formula for the Monge-Kantorovich distance \cref{eq:W1_dual} to each component of $\qc \in \Rq$. Hence, we use $\phi(\qc) \coloneqq \qc_l \in \RR$ in \cref{eq:W1_dual}, which yields
\begin{align}
\norm{ \int_\Rq \qc \dq {\pmsin_1} - \int_\Rq \qc \dq {\pmsin_2}} \notag \\
\MoveEqLeft[6]\leq 
\sum_{l=1}^\nq \abs{ \int_\Rq \qc_l \dq {\pmsin_1} - \int_\Rq \qc_l \dq {\pmsin_2}}
\notag \\
\MoveEqLeft[6]\leq 
\nq W_1(\pmsin_1,\pmsin_2). \label{eq:W1_for_nq}
\end{align}
The estimate \cref{eq:W1_for_nq} provides the upper bound
\begin{align}
\varepsilon 
&\leq \norm{\meff^{-1} \Nreal \cDphi^T \kappaq} \left( \norm{\cDphi} \norm{ \ri_1 - \ri_2 } + \norm{\int_\Rq \qc \dq {\pmsin_1} - \int_\Rq \qc \dq {\pmsin_2}} \right) 
\notag \\
&\leq C_1 ( \varrho + W_1(\pmsin_1,\pmsin_2) )
\notag 
\end{align}
with $C_1 \coloneqq \enVert[0]{\meff^{-1} \Nreal \cDphi^T \kappaq} (\norm{\cDphi} + \nq)$.

Now, we use $\frac{1}{2}(\abs{a}+\abs{b}) \leq \sqrt{a^2 + b^2}$
to transform \cref{eq:dobr_fund_estm} into
\begin{align}
\norm{r_1(t) - r_2(t)} + \norm{s_1(t) - s_2(t)} 
&\leq 
2 \sqrt{ \norm{r_1(t) - r_2(t)}^2 + \norm{s_1(t) - s_2(t)}^2} \notag \\
&\leq
2 \rho e^{Lt} + 2 \frac{C_1 (\rho + W_1(\pmsin_1,\pmsin_2))}{L}  ( e^{Lt} - 1) \notag\\
&\leq
2 (1 + \frac{C_1}{L}) \rho e^{Lt} + 2 \frac{C_1}{L}  W_1(\pmsin_1,\pmsin_2)) e^{Lt} \notag
\intertext{and we bound $\varrho$ with $\sqrt{a^2 + b^2} \leq \abs{a} + \abs{b}$ to obtain}
\norm{r_1(t) - r_2(t)} + \norm{s_1(t) - s_2(t)} \notag \\ 
\MoveEqLeft[5] \leq  C_2 \left( \norm{\ri_1-\ri_2} + \norm{\si_1 - \si_2} + W_1(\pmsin_1,\pmsin_2) \right) e^{Lt}
\label{eq:dobr_heavy_estm}
\end{align}
with the constant $C_2 \coloneqq 2(1+\frac{C_1}{L})$.

The estimate \cref{eq:dobr_heavy_estm} already looks similar to the claim. It only misses an estimate for the difference of the cross-bridge distributions $\pms_1(t)$ and $\pms_2(t)$. 
We notice that 
\[
\pms_i(t) = \push{\cflow_i(t,\cdot)}{\pmsin_i} = T_{w_i} \pmsin_i
\]
with 
\[
w_i(t) = -\cDphi ( r_i(t) - \ri_i ).
\]
Therefore, \cref{lem:W1_shift} gives
\begin{align}
W_1(\pms_1(t), \pms_2(t)) 
&=
W_1(T_{w_1(t)} \pmsin_1, T_{w_2(t)} \pmsin_2) \notag
\\
\MoveEqLeft[5] =
W_1(\pms_1(t), T_{w_2(t) - w_1(t)} \pms_2(t)) \notag
\\
\MoveEqLeft[5] \leq
W_1(\pmsin_1,\pmsin_2) + \norm{w_2(t) - w_1(t)} \notag 
\\
\MoveEqLeft[5] \leq
W_1(\pmsin_1,\pmsin_2) + \norm{\cDphi} \left( \norm{r_1(t) - r_2(t)} + \norm{\ri_1 - \ri_2} \right).
\notag \\
\MoveEqLeft[5] \leq C_2 (\norm{G_r} + 1) \left( \norm{\ri_1-\ri_2} + \norm{\si_1 - \si_2} + W_1(\pmsin_1,\pmsin_2) \right) e^{Lt}
\label{eq:dobr_particles_estm}
\end{align}
where we use $e^{Lt} \geq 1$ and \cref{eq:dobr_heavy_estm} in the last equation.

Combining \cref{eq:dobr_heavy_estm,eq:dobr_particles_estm}, we obtain \cref{eq:dobr_stab_estm} with the constants
\begin{align*}
C \coloneqq C_2 ( 2 + \norm{G_r}), \quad
L \coloneqq \norm{\meff^{-1}} \left( \norm{\kappar} +  \Nreal \norm{\cDphi^T \kappaq \cDphi} \right) + 1.
\end{align*}
\qed
\end{proof}

\section{Generalisations and Relations to Established Models}
\label{sec:generalisations}

The model for attached cross-bridges demands for extensions in two directions:
For applications in biology, the cross-bridge model should incorporate more biological effects, for example, cross-bridge cycling.
For further mathematical investigations, partially kinetic systems can be studied in more generality, most notably with nonlinear constraints or stochastic jumps.
In this section, we give a brief outlook on these extensions.

\subsection{An Abstract Class of Nonlinear Partially Kinetic Systems}
\label{subsec:abstract_pks}

The underlying system \cref{eq:dm_newton_r,eq:dm_newton_qj,eq:dm_constr_ind3} is a prototype for linear partially kinetic systems. A possible extension would be to consider systems with nonlinear forces $\Fr(r), \Fq(\qd_j)$ and with uniform but nonlinear constraints $g(r,\qd_j) = g(\ri,\qdi_j)$, for example
\begin{align}
\Mr \ddot r &= \Fr(r) - \sum_{i=1}^{\N} \dpd{g^T}{r} \lambda_i , 
\label{eq:nonlin_newton_r}
\\
\Mq \ddot \qd_j &= \Fq(\qd_j) - \dpd{g^T}{\qd_j}\lambda_j \quad \text{for~} j=1,\dots,\N, 
\label{eq:nonlin_newton_qj}
\\
g(r,\qd_j)  &= g(\ri,\qdi_j)  \quad \text{for~} j=1,\dots,\N.
\label{eq:nonlin_constr_ind3}
\end{align}
Despite the nonlinear terms, the formal derivation of kinetic equations for \cref{eq:nonlin_newton_r,eq:nonlin_newton_qj,eq:nonlin_constr_ind3} follows similar algebraic steps as in \cref{subsec:kin_char_flow_ode,subsec:kin_char_flow_dae}.

\begin{remark}
\label{rmk:nonlinear_dobr}
We conjecture that \cref{thm:linear_dobrushin} generalises to nonlinear partially kinetic systems \cref{eq:nonlin_newton_r,eq:nonlin_newton_qj,eq:nonlin_constr_ind3}, if certain Lipschitz conditions and technical bounds are satisfied.
In contrast to the linear case, the effective mass matrix $\meff$ depends on the cross-bridge distribution $\pmst$.
Therefore, it is more challenging to obtain the Lipschitz constant $L$ and the 
defect $\varepsilon$ as in \cref{eq:dobr_vareps}.
Moreover, the mean-field characteristic flow $\cflow(t,\cdot)$ will be nonlinear and there is no explicit formula as in \cref{eq:cflow_explicit}. Therefore \cref{lem:W1_shift} is not applicable in the nonlinear case.
\end{remark}

\subsection{Coupling with Nonlinear Elasticity}
\index{Elasticity}
The muscle model in \cref{sec:dae_model}  allows only linear constraints and linear forces. This is not sufficient for realistic multi-scale models.
On a large scale, muscles can be modelled as nonlinear, quasi-incompressible, hyperelastic solids \cite{simeonModelMacroscaleDeformation2009}. 

Using the framework of partially kinetic systems, a nonlinear constraint can link a hyperelastic model at the large scale for the muscle tissue and the cross-bridge model at the physiological scale. The resulting constraint is linear with respect to the extension of cross-bridges but nonlinear with respect to the deformation of the tissue. In this sense, the situation is similar to \cref{subsec:abstract_pks}
Additional complexity arises since the macroscopic system in this setting is described by the PDEs of elasticity, which results in an infinite-dimensional system already in the discrete case.
We can assume that most material points of the muscle are occupied parallel actin and myosin filaments. This assumption leads mathematically to an infinite family of cross-bridge models, one at each spatial point of the muscle. A formal derivation is possible and not very different from the theory presented in this article. Analytical results, however, are far more challenging in this setting.
Even more, the cross-bridges of sarcomeres at neighbouring spacial points can be in very different states. Therefore, a rigorous mathematical approach requires a justification for spatial averaging over the cross-bridge states.

\subsection{Comparison with Established Cross-Bridge Models}
\label{subsec:compare_estab_models}

The model for attached cross-bridges neglects a fundamental part of cross-bridge dynamics:
Cross-bridges can attach and detach dynamically. \index{Sliding filament theory} The repeated attachment and detachment is called cross-bridge cycling.\index{Cross-bridges!cycling} Only with this mechanism, muscle cells can contract far beyond the working range of a single cross-bridge. 
Non-kinetic models, like the popular Hill model fail to 
capture some phenomena which depend on cross-bridge cycling \cite[Section 15.3.1]{keenerMathematicalPhysiology2009}. This motivates the use of kinetic models in muscle simulations.

The most common models for cross-bridge cycling are probabilistic. One example is the \emph{two-state model} \cite{zahalakDistributionmomentApproximationKinetic1981}.
The two-state model \index{Cross-bridges!two-state model} is usually only formulated on the kinetic level as a source term in the transport equation \cref{eq:muscle_transport_eq_first}. A function $h_+(t,\qc,u)$ determines the creation rate of new cross-bridges with extension $\qc \in \Rq$ at time $t$. The counterpart is a function $h_-(t,\qc,u)$, which gives the annihilation rate of cross-bridges with extension $\qc \in \Rq$.
The two-state model leads to a kinetic transport equation with source terms
\begin{align}
\dpd{\pdens}{t}(t,\qc) + \veff \dpd{\pdens}{\qc}(t,\qc) = h_+(t,\qc,u(t,\qc)) - h_-(t,\qc,u(t,\qc)).
\label{eq:muscle_transport_with_sources}
\end{align}
\index{PDE!transport}
Here, the transport velocity $\veff$ is the contraction speed of the muscle. In the setting of \cref{subsec:muscle_mf_pde}, contraction speed of the muscle is the velocity of the macroscopic system, hence $\veff = - \cDphi \dot r(t)$.
The rate functions $h_\pm$ allow controlling the contraction speed in the muscle model.
For a contracting muscle, they are such that many cross-bridges with large positive extension are created, and cross-bridges with negative extension are annihilated.
A discussion of the two-state model is not subject of this publication. Instead, we refer to \cite{keenerMathematicalPhysiology2009,howardMechanicsMotorProteins2001,herzogSkeletalMuscleMechanics2000}.
Models with more than just two-states are also studied and applied, for example in \cite{heidlaufMultiscaleContinuumModel2016,herzogSkeletalMuscleMechanics2017,heidlaufContinuummechanicalSkeletalMuscle2017}.

All these models for cross-bridge cycling have in common, that their underlying discrete microscopic model for cross-bridge cycling is, to the best of our knowledge, not specified. Instead, these models are built directly from the kinetic perspective, to avoid unnecessary complexity. The rate functions $h_+$ and $h_-$ are heuristic and usually fitted to experimental data \cite{zahalakDistributionmomentApproximationKinetic1981}.
We are not aware of a derivation for the rates $h_+$ and $h_-$
using kinetic theory. As far as we know, the most rigorous approach to derive the rate functions is based on thermodynamics principles \cite{maDistributionmomentModelEnergetics1991}.

Nonetheless, a rigorous connection between the microscopic world and macroscopic simulations requires a microscopic law for cross-bridge cycling.
One possibility could be to model the microscopic law as a pieces-wise deterministic Markov process \index{Markov process}
\cite{davisPiecewiseDeterministicMarkovProcesses1984} where a Markov process models the creation and annihilation of cross-bridges, and the deterministic model for attached cross-bridges \cref{eq:dm_newton_eff,eq:dm_newton_qj,eq:dm_constr_ind3} governs the system at all other times.
The corresponding mean-field limit is formally similar to the mean-field limit for chemical reactions, as outlined in \cite[Section 3.3]{darlingDifferentialEquationApproximations2008}.

Finally, we want to point out a detail which differs between established models and the kinetic model developed in this article. Since we developed a  kinetic theory which includes the constraints from the beginning, the influence of the cross-bridges onto the macroscopic system is exactly represented.
The effective balance law which governs the macroscopic system is
\cref{eq:muscle_newton_eff}
\[
\meff \ddot r = \feff,
\]
where $\feff$ includes the influence of the cross-bridges onto the macroscopic system, i.e. the muscle tissue.
Moreover, $\meff$ integrates the inertia of the cross-bridges to the effective balance law \cref{eq:muscle_newton_eff}.
In contrast, established muscle models \cite{maDistributionmomentModelEnergetics1991,heidlaufContinuummechanicalSkeletalMuscle2017,bolMicromechanicalModellingSkeletal2008} just compute the force $\feff$ and use 
\[
\Mr \ddot r = \feff.
\]
For applications, this approximation is very reasonable, since the momentum of cross-bridges $\Mq$ is considered to be very small, compared to the mass of the muscle tissue, i.e. the underlying assumption is 
$\Nreal \norm{\Mq} \ll \norm{\Mr}$, which implies $\Mr \approx \meff$.
\color{black}

\section{Numerical Simulation of Partially Kinetic Systems}
\label{sec:numerical_examples}

Partially kinetic systems are mixed systems involving Newton's equations of motion for the macroscopic components \cref{eq:muscle_newton_eff} and a non-linear transport equation for the particle density \cref{eq:muscle_transport_eq_first}.
Therefore, a perfect numerical scheme for such a system should not only nearly conserve energy and momentum, but also the mass of the particle density. It is an open issue if such a scheme exists.

In the literature on sliding filament theory, a popular method is the distributed moment method (DM method) \index{Distributed moment method} \cite{zahalakDistributionmomentApproximationKinetic1981}.
By assuming a specific shape for the cross-bridge distribution $\pmst$, it is possible to derive a closed set of differential equations for the first moments of $\pmst$ and thus approximate the solution of the transport equation for cross-bridges \cref{eq:muscle_transport_with_sources} by a three-dimensional ODE. If the particle measure is close to a Gaussian distribution, then the DM method works best.
The DM method is successful and has been used in many multi-scale simulations \cite{bolMicromechanicalModellingSkeletal2008,heidlaufContinuummechanicalSkeletalMuscle2017}.
However, the state of cross-bridges is often very different from a normal distribution. In these cases,
the DM method does not yield a numerically convergent discretisation of \cref{eq:muscle_newton_eff,eq:muscle_transport_eq_first}.
This drawback is acceptable for most applications, but precise information about the physical state of the cross-bridges is lost.

\subsection{Implementation Details}

The numerical simulations in this article are performed straightforwardly.
More advanced and adapted methods are out of scope for this article.

For numerical time integration, we used the \texttt{RADAU} \cite[Section  IV.8.]{hairerStiffDifferentialalgebraicProblems2010} and the \texttt{LSODA} \cite{petzoldAutomaticSelectionMethods1983} methods from the python package \texttt{scipy.integrate} \cite{waltNumPyArrayStructure2011}, both with numerical parameters $\mathrm{atol} = 10^{-8}, \mathrm{rtol} = 10^{-8}$ for the adaptive time-stepping scheme. For all examples in this article, the time integration was successful without any indicators for numerical instabilities.

For space discretisation of the transport equation \cref{eq:muscle_transport_eq_first}, the standard upwind discretisation \index{Discretisation!upwind} \index{PDE!transport} was used \cite[Section 10.4]{levequeFiniteDifferenceMethods2007}.
For simplicity, we assumed zero boundary conditions for the numerical spatial domain. 
The grid was chosen sufficiently large such that the boundary conditions do not influence the simulation results.
In detail, we solved the transport equation \cref{eq:muscle_transport_eq_first} restricted onto the spacial domain $[-5,7]$ on an equidistant grid with $101$ grid-points,
with
\[
\dot u(t,y_i) = 
\begin{cases}
\veff(t) \frac{1}{\Delta x}( u(t,x_i) - u(t,x_{i-1}) )  & \veff(t) \geq 0, \\
\veff(t) \frac{1}{\Delta x}( u(t,x_{i+1}) - u(t,x_i) )  & \veff(t) < 0.
\end{cases}
\]
where $\veff(t) = - \cDphi \dot r(t)$ denotes the velocity according to \cref{eq:muscle_transport_eq_first},
and $\Delta x = \frac{12}{100}$ is the space between two grid-points $y_i = -5 + i \Delta x$.

For all simulations in this article, we used the model parameters from \cref{tab:params}.
\color{black}
The parameters are not related to real cross-bridges; the numerical simulations should merely demonstrate the mathematical model, not its application to reality.

\begin{table}
\centering
\begin{tabular}{lrr} 
\toprule
Model Parameters\\  
\midrule 
Description & Symbol & Value\\ 
\midrule 
degrees of freedom (macroscopic system) & $\nr$ & 1 \\
degrees of freedom (single cross-bridge) & $\nq$ & 1 \\
number of cross-bridges & $\Nreal$ & 250 \\
actually simulated cross-bridges & $\N$ & 250 \\
mass (macroscopic system) & $\Mr$ & 20 \\
mass (single cross-bridge) & $\Mq$ & $\frac{10}{\Nreal} = 0.04$ \\
stiffness (macroscopic system) & $\kappar$ & 1 \\
stiffness (single cross-bridge) & $\kappaq$ & $\frac{1}{\Nreal} = 0.004$ \\
initial position (macroscopic system) & $\ri$ & 1\\ 
initial velocity (macroscopic system) & $\si$ & 0\\ 
initial extensions (cross-bridges) & $\qd_j$ & $\sim \mathcal N(\mu=2, \sigma^2=1)$\\ 
initial distribution (cross-bridges) & $\pdensin(\qc)$ & $= \frac{1}{\sqrt{2 \pi}}\mathrm{exp}(-\frac{(\qc - 2)^2}{2} )$\\ 
time interval & $ $ & $[0,60]$.\\
\bottomrule
\end{tabular}
\caption{All simulations in the article are for these parameters. The parameters are chosen to demonstrate the mathematical structure, not to represent a realistic biological setting.}
\label{tab:params}
\end{table}

For systems with linear constraints, the use of the upwind method might appear exaggerated. Instead, it would be sufficient to approximate the
shift between $\pmst$ and the initial measure $\pmsin$, which is given by
$
w = \cDphi (r(t) - \ri).
$
Since the transport equation \cref{eq:muscle_transport_eq_first} has no source terms,
such a numerical scheme is fast and stable. In the presence of source terms, as in \cref{eq:muscle_transport_with_sources}, the resulting numerical scheme contains stiff differential equations. Since source terms are essential for realistic muscle models, we neglect this specialised method to focus on the upwind discretisation instead.

\subsection{Loss of Numerical Energy Conservation in the Partially Kinetic Description}

\index{Conservation!energy}
In \cref{fig:linear_energy}, we compare the energies of the ODE formulation
and the partially kinetic formulation with the same initial data as in \cref{subsec:mean_field_numerical}.
The results demonstrate an essential drawback of the numerical scheme for the kinetic description.
There are numerical methods for DAE formulation \cref{eq:dm_newton_r,eq:dm_newton_qj,eq:dm_constr_ind1} and the ODE formulation \cref{eq:dm_newton_eff,eq:dm_constr_ind1} 
which conserve the energy asymptotically  \cite{hairerGeometricNumericalIntegration2006}. In this particular linear example, the numerical conservation of energy is not very difficult even for stiff integrators.
However, numerical diffusion in the upwind scheme leads to an increase in the total energy,
as shown in \cref{fig:linear_energy}.

To explain the energy increase, we recall that the potential energy of the cross-bridges is
\begin{align}
E^\mathrm{pot}_\mathrm{\qd_1,\dots,\qd_\N} &= \frac{\kappaq}{2} \sum_{i=1}^N \qd_i^2 
\intertext{
which becomes 
}
E^\mathrm{pot}_\mathrm{\pmst} &=  \Nreal \frac{\kappaq}{2}\int_\Rq \qc^2 \dq \pmst
\end{align}
in the kinetic setting. The upwind scheme increases the potential energy artificially since numerical diffusion leads to a more widespread cross-bridge distribution, as demonstrated in \cref{fig:linear_long_density}. 
We are not aware of a method which conserves the total energy numerically and at the same time works in the presence of source terms.

\begin{figure}[h]
	\centering
	\includegraphics[width=0.45\textwidth]{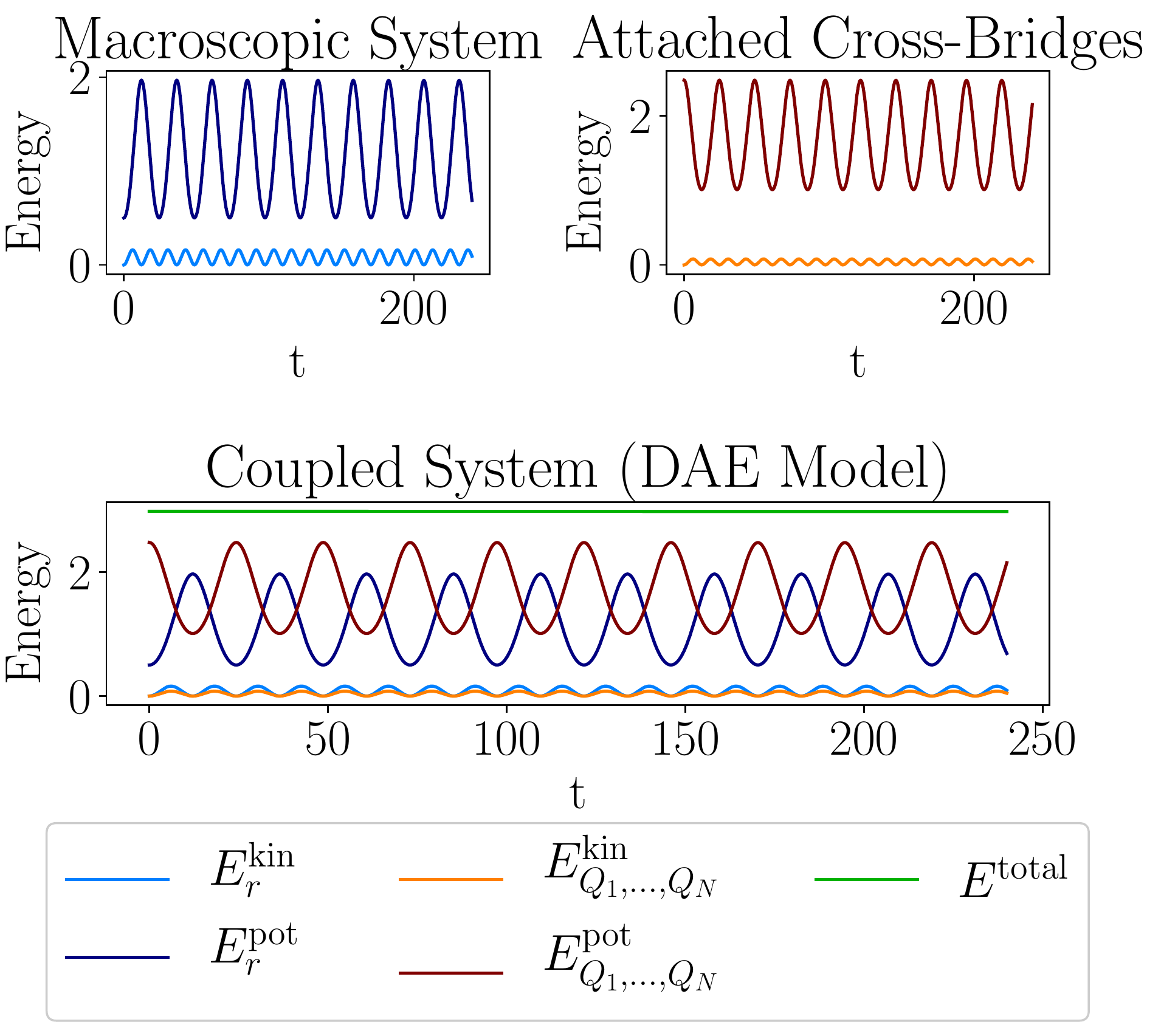}
	\hfill
	\includegraphics[width=0.45\textwidth]{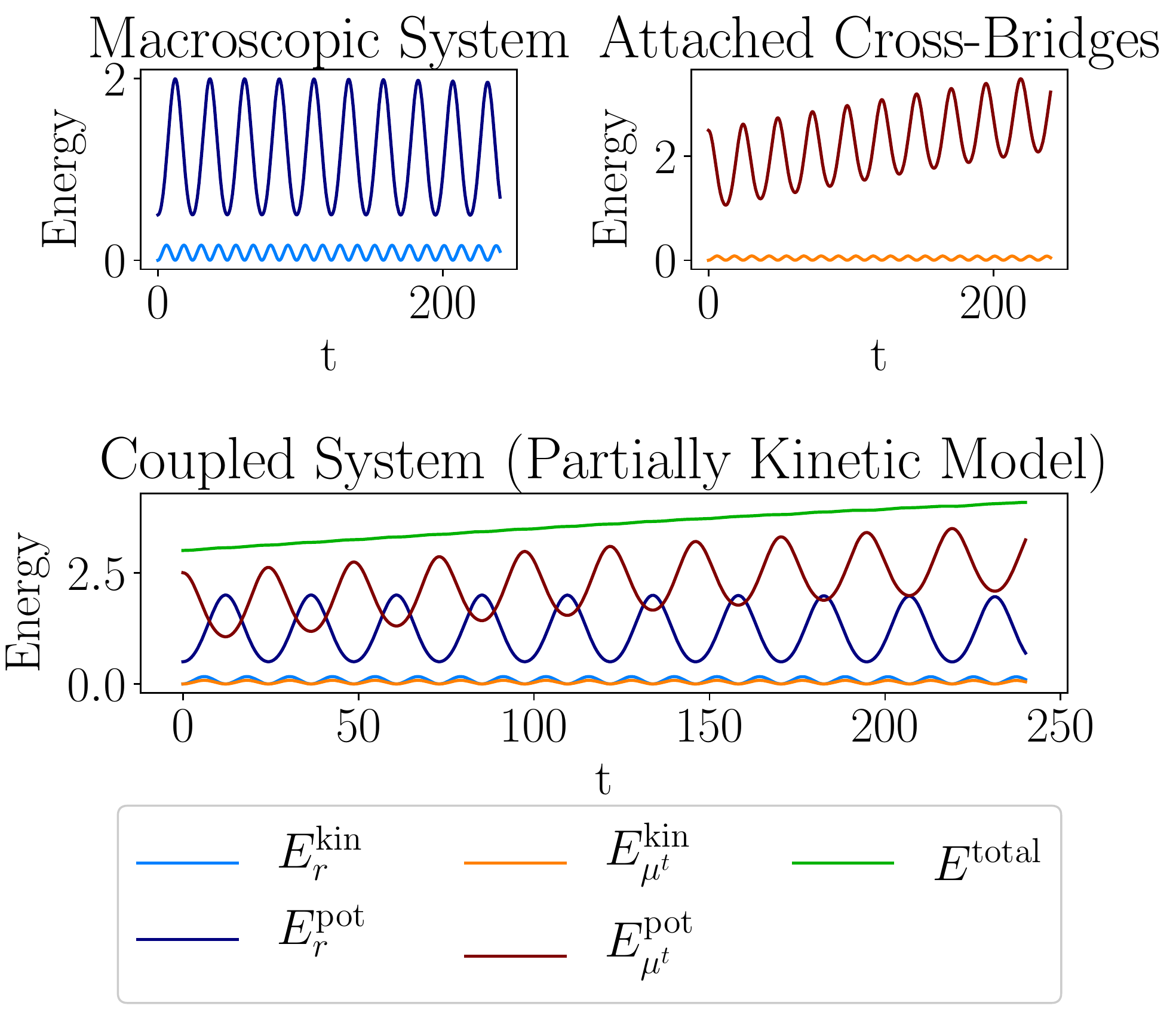}
	\caption{Energies of the discrete system (left) and the kinetic system (right). The kinetic energy is denoted by $T_r$ for the macroscopic system and $T_q$ for the particles, the potential energy is denoted by $U_r$ and $U_q$ respectively. The total energy $E_\mathrm{total}$ is well preserved in the discrete case, but not in the mesoscopic simulation.}
	\label{fig:linear_energy}
\end{figure}

\begin{figure}[h]
	\centering
	\includegraphics[width=0.6\textwidth]{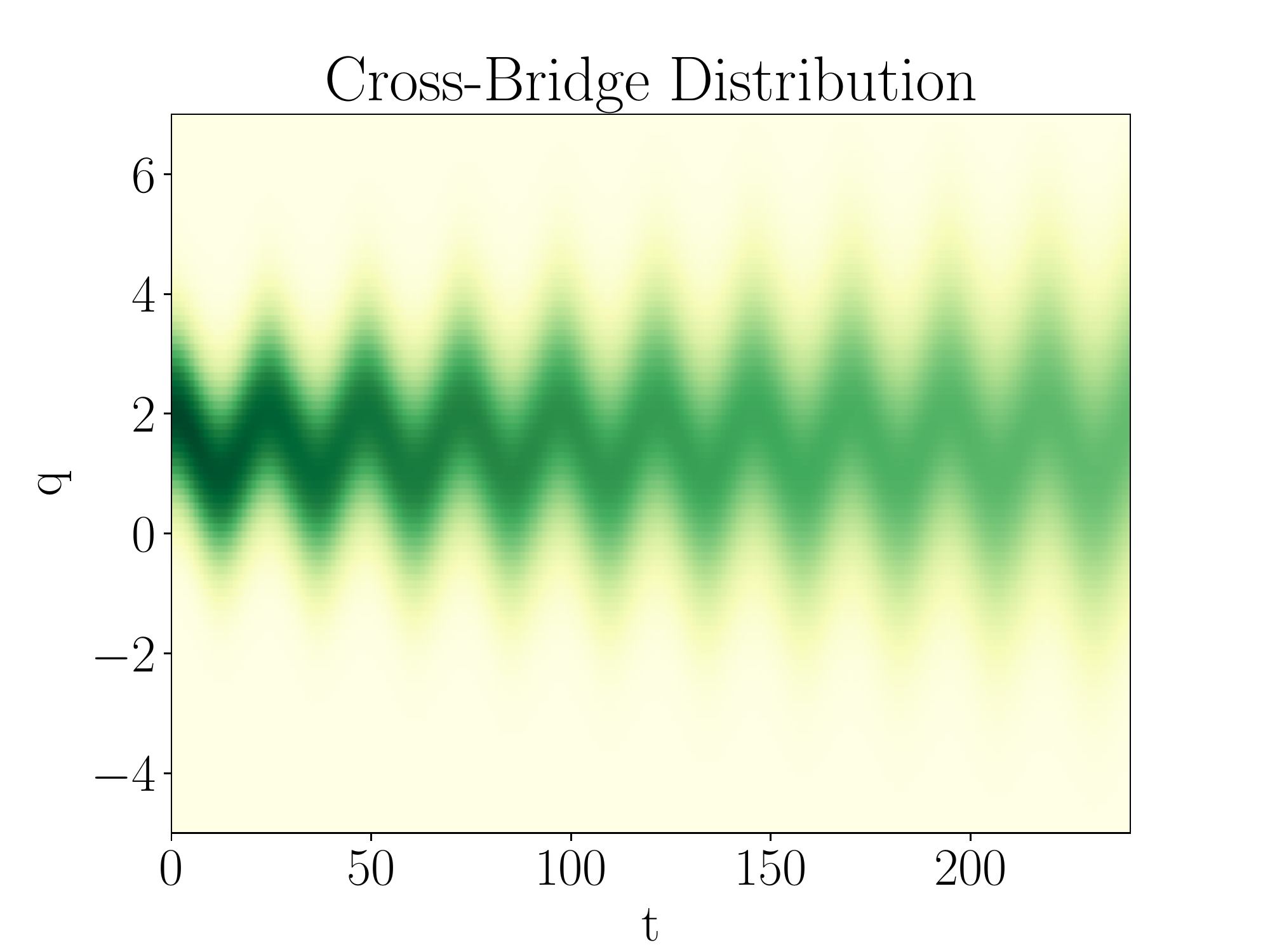}
	\caption{Numerical diffusion of the upwind scheme leads to a diffusion of the cross-bridge distribution. As a result, the potential energy of the cross-bridges increases.}
	\label{fig:linear_long_density}
\end{figure}

\section{Conclusion}

In this article, we have presented the new framework of linear partially kinetic systems. We have motivated this abstract class as a kinetic model for cross-bridge dynamics in skeletal muscles, in a manner which allows us to add constraints. The linear setting represents a toy example, for which ideas from kinetic theory are applicable. It can be argued that the analysis so far is restricted to a rather simple model scenario. 
Thus, there is a need to generalise these results to a wider class of models, which
then would yield a rigorous link between existing physiological models at different scales.

Finally, the numerics of partially kinetic systems is still in its infancy.
The investigation of the stability estimate was motivated by the lack of numerical analysis for linear partially kinetic systems.
We have presented an example in which the conservation of energy is violated, which already indicates the limitations of a naive discretisation.

\section*{Acknowledgement}

We thank Claudia Totzeck and Sara Merino-Aceituno for the fruitful discussions and hints regarding the mean-field limit. The work is motivated by the aim to develop a mathematical foundation for muscle tissue with a two-way coupling between cells and tissue. We thank Ulrich Randoll for his advice
and numerous discussion 
on the physiology of skeletal muscle tissue. This research is supported by the German Federal Ministry of Education and Research (BMBF) under grant no.  05M16UKD (project DYMARA).

\appendix

%
%

\bibliographystyle{spmpsci}      
\bibliography{ref}   

\printindex

\end{document}